\DeclareMathAlphabet{\mathpzc}{OT1}{pzc}{m}{it}
\newtheorem{theorem}{Theorem}[section]
\newtheorem{lemma}[theorem]{Lemma}
\newtheorem{proposition}[theorem]{Proposition}
\theoremstyle{definition}
\newtheorem{definition}[theorem]{Definition}
\newtheorem{assumption}[theorem]{Assumption}
\newtheorem{remark}[theorem]{Remark}
\numberwithin{equation}{section}
\acrodef{LDP}{Large Deviation Principle}
\newcommand{\be}{\begin{equation}}
\newcommand{\ee}{\end{equation}}
\newcommand{\bea} {\begin{array}{rl}}
\newcommand{\eea} {\end{array}}
\newcommand{\bepa}{\left\{ \begin{array}{l}}
\newcommand{\eepa} {\end{array}\right.}
\newcommand\norm[1]{\left\Vert#1\right\Vert}
\newcommand{\R}{\mathbb{R}} 
\newcommand{\N}{\mathbb{N}}
\newcommand{\e}{\epsilon}
\newcommand{\T}{\mathbb{T}}
\newcommand{\bd}{{\bf d}}
\newcommand{\ov}{\overline}
\newcommand{\bx}{\bm{x}}
\newcommand{\bX}{\bm{X}}
\newcommand{\bY}{\bm{Y}}
\newcommand{\bP}{\mathbb{P}}
\newcommand{\cP}{\mathcal{P}}
\newcommand{\sL}{\mathcal{L}}
\newcommand{\eps}{\epsilon}
\newcommand{\cent}{C_{\text{ent}}}
\newcommand{\cconc}{C_{\text{con,p}}}
\newcommand{\abs}{\text{ac}}
\newcommand{\linf}{L^{\infty}}
\title{Concentration bounds for stochastic systems with singular kernels}
\author[J.\ Jackson]{Joe Jackson}
\address{J.\ Jackson,
	Department of Mathematics, University of Chicago,
	\newline\hphantom{\quad \ \ J. Jackson}
	5734 S.~University Avenue, Chicago, Illinois 60637 USA
}
\email{jsjackson@uchicago.edu}
\author[A.\ Zitridis]{Antonios Zitridis}
\address{A.\ Zitridis,
	Department of Mathematics, University of Chicago,
	\newline\hphantom{\quad \ \ A. Zitridis}
	5734 S.~University Avenue, Chicago, Illinois 60637 USA
}
\email{zitridisa@uchicago.edu}
\begin{document}
	\begin{abstract}
            This note is concerned with weakly interacting stochastic particle systems with possibly singular pairwise interactions. In this setting, we observe a connection between entropic propagation of chaos and exponential concentration bounds for the empirical measure of the system. In particular, we establish a variational upper bound for the probability of a certain rare event, and then use this upper bound to show that ``controlled" entropic propagation of chaos implies an exponential concentration bound for the empirical measure. This connection allows us to infer concentration bounds for a class of singular stochastic systems through a simple adaptation of the arguments developed in \cite{jabin2018quantitative}.
	\end{abstract}
	
	
	\maketitle
	{
		}

\section{Introduction}

\subsection{Stochastic particle systems}

Let $N,d\in \mathbb{N}$ and $K:\mathbb{T}^d\rightarrow \R^d$ be a vector field defined on the $d-$dimensional flat torus $\mathbb{T}^d$. We consider the system of $N$ particles described by the dynamics
\be \label{system}
dX_t^i = \Big( F(X_t^i) + \frac{1}{N}\sum_{j\neq i}K(X_t^i-X_t^j) \Big)dt+\sqrt{2}dW_t^i,\;\; t \in [0,T], \,\, i=1,2,...,N,
\ee
where the $W^i$ are $N$ independent standard Wiener processes defined on a standard filtered probability space $\big(\Omega, \mathbb{F} = (\mathcal{F}_t)_{0 \leq t \leq T}, \bP)$. We work on a finite time horizon $[0,T]$, and we assume that the initial positions of the particles are i.i.d., i.e. 
\begin{align*}
    \bX_0 = (X_0^1,...,X_0^N) \sim \rho_0^{\otimes N} \text{ for some $\rho_0 \in \cP(\T^d)$.}
\end{align*}
Thus the data used to define our particle system consists of the time horizon $T > 0$, the maps $F, K : \T^d \to \R^d$, and the initial distribution $\rho_0 \in \cP(\T^d)$. Under these conditions, the law $\rho^N_t = \sL(\bX_t)$\footnote{Here and throughout the note we use the notation $\rho^N_t$ to indicate a curve $[0,T] \;\reflectbox{$\in$}\; t \mapsto \rho^N_t \in \cP(\T^{dN})$, and we abuse notation by writing $\rho^N(t,\cdot)$ for the density of $\rho^N_t$ when it exists.} of the particles is formally described by the Liouiville equation
\vspace{-4mm}

\begin{align} \label{Liouville fo}
\partial_t\rho^N+\sum_{i=1}^N\text{div}_{x^i}\bigg( \rho^N \Big(F(x^i) + \frac{1}{N}\sum_{j \neq i}K(x^i-x^j) \Big)\bigg)-\sum_{i=1}^N\Delta_{x_i}\rho^N=0, \qquad \rho^N_0 = \rho_0^{\otimes N},
\end{align}
where $\bx=(x^1,...,x^N) \in \T^{dN}$. When $F$ and $K$ are smooth (or at least Lipschitz), it is well known that the asymptotic behaviour of the particle system \eqref{system} is described by the non-local Fokker-Planck equation 
\be \label{limit pde}
\partial_t \overline{\rho}+\text{div}\big(\overline{\rho} \, (F + K*\overline{\rho}) \big)-\Delta \overline{\rho}=0, \qquad 
\ov{\rho}_0 = \rho_0.
\ee
More precisely, it is known that in an appropriate sense we have
\begin{align*}
    m_{\bX_t}^N = \frac{1}{N} \sum_{i = 1}^N \delta_{X_t^i} \xrightarrow{N\to \infty} \ov{\rho}_t, \quad \sL(X_t^1,...,X_t^k) \xrightarrow{N \to \infty} \ov{\rho}_t^{\otimes k}, \text{ for each fixed k}.
\end{align*}
The first statement above confirms that $\ov{\rho}_t$ is the mean field limit of the empirical measures $m_{\bX_t}^N$, while the second statement is referred to as propagation of chaos.
Quantitative versions of these statements, as well as finer results like central limit theorems, large deviations principles, and concentration bounds for the empirical measures $m_{\bX_t}^N$ have also been obtained for regular $K$. We do not make any attempt to summarize this literature, but refer to survey articles like \cite{jabinwangsurvey, chaintron2022} for an introduction. Extending such results to more singular kernels $K$, which are often met in applications, is a very active area of research. We mention in particular the recent flurry of activity around singular kernels which derive from Riesz potentials (see e.g. \cite{serfatycoulomb, breschjabinwang, rosserfaty2023, decourcel2023} and the references therein) and some recent efforts aimed at singular attractive kernels (see e.g. \cite{breschjabinwangatt, decourcelattractive}). More relevant to the present note are the slightly less recent contributions of \cite{jabin2016mean} and \cite{jabin2018quantitative}, where quantitative propagation of chaos was established by estimating the relative entropy between a solution of the Liouiville equation \eqref{Liouville fo} and the tensor product $\overline{\rho}^N=\overline{\rho}^{\otimes N}$ of the solution of \eqref{limit pde}. More precisely, \cite{jabin2018quantitative} established the following quantitative version of ``entropic propagation of chaos":
\be \label{entest}
H_N(\rho^N_T|\overline{\rho}^N_T)\leq H_N(\rho_0^N|\overline{\rho}_0^N)+ C/N,
\ee
for some constant $C$ independent of $N$, where $H$ denotes the rescaled relative entropy
$$H_N(\rho^N_t|\overline{\rho}^N_t):=\frac{1}{N}H(\rho^N_t|\overline{\rho}^N_t)=\frac{1}{N}\int_{\mathbb{T}^{dN}}\rho^N(t,\bx)\log\frac{\rho^N(t,\bx)}{\overline{\rho}^N(t,\bx)}d\bx.$$
We note that \eqref{entest} is established under a general initial condition for \eqref{Liouville fo}, but if $\rho^N_0 = \ov{\rho}_0^{\otimes N}$ then the first term on the right-hand side of \eqref{entest} vanishes, leading to $H_N(\rho^N_T| \ov{\rho}^N_T) = O(1/N)$, which by a classical subadditivity property of relative entropy gives $H(\rho^{N,k}_T | \ov{\rho}^{\otimes k}_T) = O(k/N)$, where $\rho^{N,k}$ denotes the $k$-particle marginal of $\rho^N$, and in particular $H(\rho^{N,k}_T | \ov{\rho}^{\otimes k}_T) \to 0$ as $N \to \infty$ for each fixed $k$, which is what we mean by entropic propagation of chaos. We note also that establishing \eqref{entest} is not the only way to obtain quantitative entropic propagation of chaos - we refer to \cite{lackerhierarchies} for a more detailed discussion and for an approach which leads to optimal bounds on $H(\rho_T^{N,k} | \ov{\rho}_T^{\otimes k})$.

\subsection{Our results}

Our goal in this note is to point out a connection between entropic propagation of chaos and concentration inequalities for the empirical measure of the particle system \eqref{system}. In particular, for $1 \leq p < \infty$, we are interested in bounds of the form 
\begin{align} \label{concentration}
    \bP\Big[ \bd_p(m^N_{\bX_T}, \ov{\rho}_T) > \eps \Big] \leq \cconc \exp( - \cconc^{-1} a_p(\eps) N), \quad a_p(\eps) \coloneqq 
    \begin{cases}
        \eps^{2p} & p > d/2, \\
        \eps^{2p}/\big(\log(2 + 1/\eps^p)\big)^2 & p = d/2,\\
        \eps^{d} & p < d/2, 
    \end{cases}
\end{align}
where $\bd_p$ denotes the $p$-Wasserstein distance.
That is, we want to show that the empirical measure for the particle system admits concentration bounds of the same type available for i.i.d. samples, as established in \cite{FournierGuillin}. When written in terms of the Liouville equations (which is often necessary for technical reasons when $K$ is singular), \eqref{concentration} becomes
\begin{align} \label{concentrationliouville}
    \rho^N_T( A^p_{N,\e}) \leq \cconc \exp(- \cconc^{-1} a_p(\eps) N ), \text{ where }A^p_{N,\e}=\bigg\{ \bx\in \mathbb{T}^{dN}\bigg| {\bf d}_p(m^N_{\bx},\overline{\rho}^N_T)>\e \bigg\}.
\end{align}
Such concentration inequalities were obtained for regular interactions by several authors. We highlight in particular \cite{Bolley2005QuantitativeCI}, where concentration inequalities for the system \eqref{system} were inferred from concentration inequalities for i.i.d. random variables via the so-called ``synchronous coupling" method originally due to McKean \cite{mckean} and popularized by Sznitman \cite{sznitman}, and \cite{Delarue2018FromTM}, where a more general concentration of measure result was obtained by exploiting certain (uniform in $N$) functional inequalities for the law of $\bX = (X^1,...,X^N)$. When $K$ is singular, the approaches in \cite{Bolley2005QuantitativeCI} and \cite{Delarue2018FromTM} break down, and the only estimate similar to \eqref{concentration} we are aware of is Proposition 5.3 of \cite{hesschilds}, where the structure of the repulsive Riesz interactions is leveraged to get bounds somewhat similar to \eqref{concentration}.

To explain our main idea, we must introduce the ``controlled Liouville equation'':
\be \label{Liouville foc}
\partial_t f^N+\sum_{i=1}^N\text{div}_{x^i}\bigg( f^N\Big(\alpha^i(t,\bx)+ F(x^i) + \frac{1}{N}\sum_{j\neq i} K(x^i-x^j)\Big)\bigg)-\sum_{i=1}^N\Delta_{x^i}f^N=0,
\ee
where $\bm \alpha =(\alpha^1,...,\alpha^N) : [0,T] \times (\T^d)^N \to (\R^d)^N$ is a measurable map which we view as a control. For the so-called $W^{-1,\infty}$ kernels of \cite{jabin2018quantitative}, one can (as is explained in more detail in the proof of Proposition \ref{prop.wminus}) easily generalize the original entropy estimate \eqref{entest} of Jabin and Wang to get an estimate of the form\footnote{The factor of $1/4$ in \eqref{controlledest} is purely aesthetic, and is included to make \eqref{controlledest} more consistent with Proposition \ref{varformula} below.}
\vspace{-2mm}
\be \label{controlledest}
H_N(f^N_T|\overline{\rho}^N_T)\leq H_N(f^N_0|\overline{\rho}^N_0)+\cent \bigg( \frac{1}{N}+\frac{1}{4N}\sum_{i=1}^N\int_0^T\int_{\mathbb{T}^d}|\alpha^i(t,\bx)|^2 df^N_t(\bx) dt\bigg),
\ee
for any solution $f^N$ to \eqref{Liouville foc}. We emphasize that here $f^N_0$ may be different from $\rho_0^{\otimes N}$. We think of \eqref{controlledest} as a ``controlled" or ``perturbed" version of the entropic propagation of chaos estimate \eqref{entest} of Jabin and Wang. Our main observation is that an estimate of the form \eqref{controlledest} in fact implies a concentration bound of the form \eqref{concentration}. For technical reasons, we first make this observation precise through the following a-priori estimate, which requires $K$ and $\ov{\rho}^0$ to be bounded.

\begin{theorem} \label{thm.apriori}
    There are constants $C_{d,p}$ depending only on $d$ and $p$ such that the following holds: Suppose that $K$ and $F$ are bounded, the initial condition $\rho_0$ has a bounded density, and that there exists a solution $\ov{\rho}$ to \eqref{limit pde} and a constant $\cent \geq 1$ such that \eqref{controlledest} holds for all $\bm \alpha$ and $f^N$ such that $f^N$ is an entropy solution of \eqref{Liouville foc} as in Definition \ref{def entropy} below. Then the concentration bound \eqref{concentration} holds for each $p$, with $\cconc = C_{d,p} \cent$.
\end{theorem}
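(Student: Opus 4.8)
The plan is to combine the variational upper bound of Proposition~\ref{varformula} with two standard ingredients: the concentration inequality of \cite{FournierGuillin} for empirical measures of i.i.d.\ samples, and the elementary ``rare event'' form of the Donsker--Varadhan inequality, namely that for probability measures $f\ll g$ and any event $B$, applying Donsker--Varadhan with the test function $\bigl(\log\tfrac{1}{g(B)}\bigr)\ind_B$ gives $f(B)\log\tfrac{1}{g(B)}\le\log 2+H(f\,|\,g)$. Write $A=A^p_{N,\e}$. Under the stated hypotheses the drift $F+K*\ov{\rho}$ in \eqref{limit pde} is bounded, so standard parabolic regularity shows that $\ov{\rho}_T$ has a bounded, strictly positive density; in particular $\ov{\rho}^N_T=\ov{\rho}_T^{\otimes N}$ is a product of such measures, so \cite{FournierGuillin} applies to it. Since $\rho^N_T=\sL(\bX_T)$, estimate \eqref{concentrationliouville} is equivalent to \eqref{concentration}, so I will prove the former.

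From Proposition~\ref{varformula} I take, for each $\lambda\ge0$, an upper bound of the shape
\begin{align*}
\rho^N_T(A)\ \le\ \exp\!\Bigl(-\inf_{\bm\alpha}\bigl\{\,\lambda\, f^N_T(A^c)+\mathcal C(\bm\alpha)\,\bigr\}\Bigr),\qquad \mathcal C(\bm\alpha):=\tfrac14\sum_{i=1}^N\int_0^T\!\!\int|\alpha^i(t,\bx)|^2\,df^N_t(\bx)\,dt,
\end{align*}
where the infimum runs over controls $\bm\alpha$ whose controlled Liouville solution $f^N$, started from $f^N_0=\rho_0^{\otimes N}$, is an entropy solution of \eqref{Liouville foc} in the sense of Definition~\ref{def entropy}; this bound is the specialization of Proposition~\ref{varformula} to the function $\lambda\ind_{A^c}$, together with the pointwise inequality $\ind_A\le \exp(-\lambda\ind_{A^c})$. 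It therefore suffices to bound $\lambda\, f^N_T(A^c)+\mathcal C(\bm\alpha)$ from below, uniformly over admissible $\bm\alpha$, by a positive multiple of $\cent^{-1}a_p(\e)N$ for a suitable $\lambda$.

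So fix an admissible control and set $q:=f^N_T(A)$. Since $f^N_0=\rho_0^{\otimes N}=\ov{\rho}^N_0$ the initial-entropy term in \eqref{controlledest} vanishes; multiplying that inequality by $N$ and recalling that its quadratic term equals $\tfrac1N\mathcal C(\bm\alpha)$, it reads $H(f^N_T\,|\,\ov{\rho}^N_T)\le \cent(1+\mathcal C(\bm\alpha))$. The rare-event inequality with $f=f^N_T$, $g=\ov{\rho}^N_T$, $B=A$ gives $q\log\tfrac{1}{\ov{\rho}^N_T(A)}\le \log 2+H(f^N_T\,|\,\ov{\rho}^N_T)$, while \cite{FournierGuillin}, applied to $N$ i.i.d.\ draws from $\ov{\rho}_T$, gives $\ov{\rho}^N_T(A)\le C_{d,p}\exp(-C_{d,p}^{-1}a_p(\e)N)$, i.e.\ $\log\tfrac{1}{\ov{\rho}^N_T(A)}\ge M$ with $M:=C_{d,p}^{-1}a_p(\e)N-\log C_{d,p}$. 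Chaining these yields $\mathcal C(\bm\alpha)\ge \cent^{-1}(qM-\log 2)-1$, so the bracketed quantity in the variational bound is at least $\lambda(1-q)+\cent^{-1}(qM-\log 2)-1=\lambda+q(\cent^{-1}M-\lambda)-\cent^{-1}\log 2-1$. Choosing $\lambda=\cent^{-1}M$ --- which is legitimate, since $M\ge0$, once $a_p(\e)N$ exceeds a constant depending only on $d,p$ (in the complementary range $a_p(\e)N$ is bounded by such a constant and \eqref{concentrationliouville} is trivial, as $\rho^N_T(A)\le1$, once $\cconc$ is taken large enough) --- cancels the $q$-dependence and leaves $\cent^{-1}(M-\log 2)-1$. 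Hence $\rho^N_T(A)\le \exp(1+\cent^{-1}\log 2-\cent^{-1}M)$, and using $\cent\ge1$ together with the definition of $M$ to absorb the constants gives $\rho^N_T(A)\le C_{d,p}\exp(-C_{d,p}^{-1}\cent^{-1}a_p(\e)N)$, which is \eqref{concentrationliouville}, hence \eqref{concentration}, with $\cconc=C_{d,p}\cent$ after relabelling the $d,p$-constant.

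I expect the genuine content to lie in Proposition~\ref{varformula} itself, whose proof should be a Girsanov / Bou\'e--Dupuis computation for the controlled particle system; granting it, the argument above is essentially bookkeeping, and the one place needing real care is the compatibility of the two inputs being combined --- one must ensure that the class of controls over which Proposition~\ref{varformula} takes its infimum is contained in the class for which the controlled estimate \eqref{controlledest} is assumed to hold, i.e.\ that it is enough to consider controls whose controlled Liouville equation admits an entropy solution in the sense of Definition~\ref{def entropy}. This is precisely what the entropy-solution formalism is designed to handle, together with a routine approximation reducing attention to, say, bounded controls, for which \eqref{Liouville foc} is classically well posed.
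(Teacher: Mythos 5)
Your overall strategy coincides with the paper's: lower-bound $-\frac1N\log\rho_T^N(A)$ via Proposition \ref{varformula}, and then show that any competitor with small cost would, via \eqref{controlledest}, have $H(f^N_T|\ov\rho_T^N)$ too small to be compatible with the Fournier--Guillin bound for $\ov\rho_T^{\otimes N}$. The paper runs this with the exact duality \eqref{generalvarformula} applied to the hard constraint $f_T(A)=1$, phrased as the implication \eqref{fgrewritten}; your detour through the soft terminal cost $\lambda\ind_{A^c}$, the approximate rare-event inequality with its $\log 2$, and the optimization over $\lambda$ lands in the same place but buys nothing, and it is where the two real issues in your write-up sit.

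Two intermediate claims are not justified as written, though both are repairable. First, the bound $\rho_T^N(A)\le\exp\bigl(-\inf\{\lambda f^N_T(A^c)+\mathcal C(\bm\alpha)\}\bigr)$ is \emph{not} a specialization of Proposition \ref{varformula}: that proposition is stated only for the hard constraint $f_T(A)=1$, not for general terminal costs, so you are invoking a variant that would require its own proof (whose hard part --- handling singular $K$ through entropy solutions --- is exactly the content of the proposition you are granting). Second, you restrict the infimum to competitors with $f^N_0=\rho_0^{\otimes N}$. Proposition \ref{varformula} takes the infimum over arbitrary $f_0$ with the penalty $\frac1N H(f_0|\rho_0^N)$; restricting the class of competitors \emph{increases} the infimum, so the restricted form of the variational bound does not follow from the proposition and is asserted in the wrong direction. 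Both issues vanish if you use Proposition \ref{varformula} exactly as stated: take any competitor $(\bm\alpha,f)$ with $f_T(A)=1$ and arbitrary $f_0$; then $H(f_T|\ov\rho_T^N)\ge-\log\ov\rho_T^N(A)\ge M$ directly from \eqref{generalvarformula} (no $\log 2$ needed), while \eqref{controlledest} gives $H(f_T|\ov\rho_T^N)\le H(f_0|\ov\rho_0^N)+\cent\bigl(1+\mathcal C(\bm\alpha)\bigr)\le \cent\,N\mathcal E+\cent$, where $\mathcal E$ is the full cost appearing in \eqref{varlowerbound} (using $\ov\rho_0=\rho_0$ and $\cent\ge1$). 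Hence every competitor satisfies $\mathcal E\ge(\cent N)^{-1}(M-\cent)$, which is the paper's argument and yields \eqref{concentration} with $\cconc=C_{d,p}\cent$ after the same bookkeeping you already carried out. Your remaining ingredients (the reduction to \eqref{concentrationliouville}, the application of Fournier--Guillin to $\ov\rho_T^{\otimes N}$, and the treatment of the trivial range where $a_p(\e)N$ is bounded) are fine; note that on the torus Fournier--Guillin needs no density for $\ov\rho_T$, so the parabolic-regularity remark is superfluous.
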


\begin{remark}
    We note that because $K$ and $F$ are bounded and there is non-degenerate idiosyncratic noise, there is no problem making sense of the SDE \eqref{system} or the Liouville equation \eqref{Liouville fo}.
\end{remark}

\begin{remark}
    While $K$ is required to be bounded for technical reasons, the key point of the a-priori estimate is that $\cconc$ depends on $K$ \textit{only} through the constant $\cent$ appearing in \eqref{controlledest}, and not on $\|K\|_{\infty}$. Similarly, there is no explicit dependence of our a-priori estimate on $T$, so if one establishes \eqref{entest} with a constant $\cent$ independent of $T$ (which can be done for the 2-D stochastic vortex model with the techniques of \cite{guillin2024}) then one gets a uniform in time concentration bound.
\end{remark}

\begin{remark}
    Notice that in Theorem \ref{thm.apriori}, we only require \eqref{entest} to hold for \textit{entropy} solutions of \eqref{Liouville foc}, rather than for all weak solutions. This creates some technical challenges in the proof, but it is essential for the application to $W^{-1,\infty}$ kernels below. 
\end{remark}

As mentioned above, we can verify that \eqref{controlledest} holds under roughly the same conditions as in \cite{jabin2018quantitative}. Thus we make use of the following assumption (and refer to the notation section below for the definition of $W^{-1,\infty}$):
\begin{assumption} \label{assump.main}
    For some $\beta \in (0,1)$, $C_0 > 0$, we have $F \in C^{1,\beta}$, $\rho_0 \in C^{2,\beta}$, $K$, $\text{div} \, K \in W^{-1,\infty}$, and 
    \begin{align*}
        \big( \inf_x \rho_0(x) \big)^{-1} + \|\rho_0\|_{C^{2,\beta}} + \|F\|_{C^{1,\beta}} + \|K\|_{-1,\infty} + \|K\|_{L^1} + \|\text{div} \, K\|_{-1,\infty} \leq C_0. 
    \end{align*}
\end{assumption}
\begin{proposition}
    \label{prop.wminus}
    Let Assumption \ref{assump.main} hold. Then there is a unique classical solution $\ov{\rho}$ to \eqref{limit pde}, and for each $N \in \N$ there exists an admissible entropy solution $\rho^N$ to \eqref{Liouville fo} in the sense of Definition \ref{def.admissiblesoln}. Moreover, for each $p \in [1,\infty)$ there is a constant $\cconc$ which depends only on $d$, $p$, $T$ and $C_0$, such that any admissible entropy solution $\rho^N$ of \eqref{Liouville fo} satisfies \eqref{concentrationliouville}. 
\end{proposition}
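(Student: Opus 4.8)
The plan is to reduce the concentration bound to the a-priori estimate of Theorem~\ref{thm.apriori} by mollifying the kernel, and then to remove the mollification. First I would fix a standard mollifier $(\chi^\eta)_{\eta>0}$ on $\T^d$ and set $K^\eta \coloneqq K*\chi^\eta$. Since $K^\eta$ is smooth it is bounded, and since convolution against a probability density increases none of the norms in Assumption~\ref{assump.main} — using $\mathrm{div}\,K^\eta = (\mathrm{div}\,K)*\chi^\eta$ and the fact that $K=\mathrm{div}\,V$ implies $K^\eta=\mathrm{div}(V*\chi^\eta)$ — Assumption~\ref{assump.main} holds with $K$ replaced by $K^\eta$ and the \emph{same} constant $C_0$. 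Standard parabolic theory, exactly as in \cite{jabin2018quantitative}, then yields for each $\eta$ a unique classical solution $\bar\rho^\eta$ of \eqref{limit pde} with $K^\eta$ in place of $K$, satisfying $0<c\leq\bar\rho^\eta\leq C$ and $\|\nabla\log\bar\rho^\eta\|_\infty\leq C$ for constants $c,C$ depending only on $d,T,C_0$, hence uniform in $\eta$; moreover $\bar\rho^\eta\to\bar\rho$, the corresponding solution for $K$ itself, which gives the first assertion. Likewise the Liouville equation \eqref{Liouville fo} with $K^\eta$ has a classical solution $\rho^{N,\eta}$, the relative entropy and Fisher-information bounds of \cite{jabin2018quantitative} hold for it uniformly in $\eta$, and these bounds provide the compactness needed to extract a limit $\rho^N$; I would then verify that $\rho^N$ is an admissible entropy solution of \eqref{Liouville fo} in the sense of Definition~\ref{def.admissiblesoln}, which gives the second assertion.

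The crucial step is to establish \eqref{controlledest} for the $K^\eta$-system with a constant $\cent$ independent of $\eta$. For a solution $f^N$ of the controlled Liouville equation \eqref{Liouville foc} I would differentiate $t\mapsto H_N(f^N_t\,|\,(\bar\rho^\eta_t)^{\otimes N})$: this produces the usual negative Fisher-information dissipation, the $W^{-1,\infty}$ interaction term — absorbed into a fraction of the dissipation at the cost of the $O(1/N)$ and $O(H_N)$ errors of \cite{jabin2018quantitative} — and one new term
\[
\frac{1}{N}\sum_{i=1}^N\int_{\T^{dN}} f^N_t(\bx)\,\alpha^i(t,\bx)\cdot\nabla_{x^i}\log\frac{f^N_t(\bx)}{(\bar\rho^\eta_t)^{\otimes N}(\bx)}\,d\bx
\]
coming from the control, which I would bound by Young's inequality, consuming a further fraction of the dissipation and leaving the residual $\tfrac{1}{4N}\sum_i\int|\alpha^i(t,\bx)|^2\,df^N_t(\bx)$. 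Gronwall then gives \eqref{controlledest}, with $\cent$ depending only on $d,T,C_0$ (through $\|F\|_{C^{1,\beta}}$, $(\inf\rho_0)^{-1}$, and the bounds $\|K^\eta\|_{-1,\infty},\|\mathrm{div}\,K^\eta\|_{-1,\infty},\|K^\eta\|_{L^1}\leq C_0$), hence uniform in $\eta$. For classical $f^N$ this is a direct computation; for entropy solutions the inequality is built into the definition (Definition~\ref{def entropy}) via an approximation in the control, which is exactly why Theorem~\ref{thm.apriori} is phrased for entropy solutions. Since $K^\eta$ and $F$ are bounded and $\rho_0$ has a bounded density, Theorem~\ref{thm.apriori} now applies to the $K^\eta$-system and yields, with $\cconc\coloneqq C_{d,p}\cent$ independent of $\eta$,
\[
\rho^{N,\eta}_T\big(\{\bx\in\T^{dN}:\bd_p(m^N_\bx,\bar\rho^\eta_T)>\eps\}\big)\leq\cconc\exp\big(-\cconc^{-1}a_p(\eps)N\big).
\]

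Finally I would let $\eta\to 0$. Since $\bar\rho^\eta_T\to\bar\rho_T$ in $\cP(\T^d)$, for any $\eps_0<\eps$ the inclusion $\{\bx:\bd_p(m^N_\bx,\bar\rho_T)>\eps\}\subseteq\{\bx:\bd_p(m^N_\bx,\bar\rho^\eta_T)>\eps_0\}$ holds for all small $\eta$; testing the weak convergence $\rho^{N,\eta}_T\to\rho^N_T$ against continuous functions supported inside the open set $A^p_{N,\eps}=\{\bx:\bd_p(m^N_\bx,\bar\rho_T)>\eps\}$ and invoking the last display gives $\rho^N_T(A^p_{N,\eps})\leq\cconc\exp(-\cconc^{-1}a_p(\eps_0)N)$, and letting $\eps_0\uparrow\eps$ and using the continuity of $a_p$ yields \eqref{concentrationliouville} for $\rho^N$. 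The same bound holds for any admissible entropy solution, since by Definition~\ref{def.admissiblesoln} every such solution arises, along a subsequence, as a weak-$*$ limit of mollified classical solutions, and the argument used only this property. I expect the main obstacle to be not the entropy computation, which is routine, but the passage to the limit $\eta\to 0$: identifying the limit of the $\rho^{N,\eta}$ as a genuine weak solution of the singular equation \eqref{Liouville fo} — the technical heart of \cite{jabin2018quantitative}, where the singular interaction term is passed to the limit using the Fisher-information bound — and then propagating the exponential estimate through a convergence that is only weak, for a target set defined by a limit density $\bar\rho_T$ that is itself only approximated by the $\bar\rho^\eta_T$.
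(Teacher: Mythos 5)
Your proposal is correct and follows essentially the same route as the paper: mollify $K$, establish the controlled entropy estimate uniformly in the mollification parameter via the Jabin--Wang computation plus Young's inequality on the control term (this is Lemma \ref{lem.controlledentropyest}), apply Theorem \ref{thm.apriori} to the mollified system, and pass to the limit using the weak convergence built into Definition \ref{def.admissiblesoln} together with the inclusion of the deviation sets for the approximating limits $\ov{\rho}^{\delta}_T$. The only cosmetic differences are that the paper handles the loss in $\eps$ by passing from $\eps$ to $2\eps$ and doubling $\cconc$ rather than sending $\eps_0 \uparrow \eps$, and cites \cite{jabin2018quantitative} for the existence of an admissible entropy solution rather than re-deriving it.
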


\begin{remark}
    Regarding possible extensions: one can easily extend Theorem \ref{thm.apriori} by replacing $\T^d$ by $\R^d$, but the concentration bound will then depend on the (exponential) moments of the limit $\ov{\rho}$. We focus on the periodic case because in this case we can adapt the arguments of \cite{jabin2018quantitative} to obtain \eqref{entest}. However, the recent preprint \cite{wangfullspace} shows that the program of \cite{wangfullspace} can be carried out in the whole space for the 2-D viscous vortex model, so the (non-periodic analogue of) our Theorem \ref{thm.apriori} can be combined with the argument in \cite{wangfullspace} to obtain an anologue of Proposition \ref{prop.wminus} in this setting. Likewise, one can easily adapt Theorem \ref{thm.apriori} to the kinetic setting, in which case the ``controlled Liouiville equation" will involve a control only in the velocity (as is easily seen from an inspection of the proof of Lemma \ref{varformula}, in particular the application of Girsanov's Theorem). By combining such a result with the techniques of \cite{jabin2016mean}, one can easily derive an anologue of Proposition \ref{prop.wminus} for kinetic particle systems with bounded forces, i.e. in the same setting as \cite{jabin2016mean}.
\end{remark}

\section{Notation and Preliminaries}

\subsection{Notation and some definitions}
Throughout the paper $T$ is a positive real number and $\mathbb{T}^d$ is the $d-$dimensional flat torus. $f*g$ is the convolution between the functions $f,g$. If a function $u:[0,T]\times\mathbb{T}^d\rightarrow \R$ is sufficiently regular, then we denote by $\partial_t u,\; D_{x_i }u$ the partial derivative with respect to $t$ and the partial derivative with respect to $x_i$ (the $i$-th coordinate) for $i=1,2,...,d$, respectively. For $p\in [1,+\infty]$, we use the symbol $L^p_{t,x}$ (resp. $L^p_x)$ for the space of functions $f$ such that $\int |f(t,x)|^pdtdx<+\infty$ (resp. $\int |f(x)|^pdx<+\infty$). For $k\geq 1$, $p\in [1,+\infty]$ and $\beta\in (0,1)$, we denote by $W^{k,p}$, $C^{k,\beta}$ the Sobolev ($k$ weak derivatives in $L^p$) and H\"older spaces ($k$ derivatives which are $\beta$-H\"older continuous) on $\T^d$, respectively. Also, we write $C^{k,\beta}_{t,x}$ for the standard parabolic H\"older spaces on $[0,T] \times \T^d$, e.g. $C^{2,\beta}_{t,x}$ will be the space of functions $f(t,x)$ such that $\partial_t f$, $Df$, $D^2f$ exist, and are $\beta/2$-H\"older in $t$ and $\beta$-H\"older in $x$. Similarly, we will use $W^{k,p}_{t,x}$ for  the standard parabolic H\"older spaces, e.g. $W^{2,p}_{t,x}$ will indicate the space of functions $f(t,x)$ such that $\partial_t f$, $Df $, $D^2f$ are in $L^p$.

\noindent
$\mathcal{P}(\mathbb{T}^d)$ (resp. $\mathcal{P}(X)$) is the space of probability measures over $\mathbb{T}^d$ (resp. a Polish space $X$), and $\cP_{\abs}(\T^{dN}) \subset \cP(\T^{dN})$ is the set of probability measures which admit a density with respect to the Lebesgue measure. For $p\geq 1$, the $p-$Wasserstein space is denoted by $\mathcal{P}_p(\mathbb{T}^d)$ and its metric is ${\bf d }_p$. Given a curve $[0,T]\; \reflectbox{$\in$}\; t \mapsto m_t \in \cP(\T^d)$, we write $L^2_{dt\otimes m_t}([0,T]\times \mathbb{T}^d,\R^d)$ for the space of $\R^d$-valued $m_t \otimes dt$-square integrable functions over $[0,T]\times\mathbb{T}^d$. The space of $\R^d$-values Borel measures over $[0,T]\times \mathbb{T}^d$ with finite total variation is denoted by $\mathcal{M}([0,T]\times\mathbb{T}^d,\R^d)$. For any $N\in\mathbb{N}$, $\mathbb{U}_N$ is the uniform distribution on $\mathbb{T}^{dN}$; we will write $\mathbb{U}$ when there is no confusion.
The relative entropy $H(\mu |\nu)$ of two probability measures $\mu,\nu$ is defined as follows
$$H(\mu|\nu)=\begin{cases}
    \int_{\mathbb{T}^d} \frac{d\mu}{d\nu}\log(\frac{d\mu}{d\nu})d\nu(x),&\text{ if } \mu \ll\nu,\\
    +\infty,& \text{ otherwise.}
\end{cases}$$
For simplicity, we write $H(\mu) = H(\mu | \mathbb{U})$. For solutions of the Liouville equation \eqref{Liouville fo} we will be using the symbol $\rho^N_t$ or $\rho^N(t)$ to denote an element of $\mathcal{P}_{\abs}(\T^d)$ and the symbol $\rho^N(t,x)$ for the corresponding density, which we can view as an element of $L^{\infty}_tL^1_x$. We use the analogous notation for $f^N$, a solution of the ``perturbed'' Liouville equation \eqref{Liouville foc}. We will be using the notation $I(\rho)=\int_{\T^{dN}}\frac{|D_{\bx}\rho|^2(\bx)}{\rho(\bx)}d\bx$ for the Fisher information and, for $j=1,...,N$, $I_j(\rho)=\int_{\T^{dN}}\frac{|D_{x^j}\rho|^2(\bx)}{\rho(\bx)}d\bx$. 

We now recall the precise definition of the space $W^{-1,\infty}$.
\begin{definition}
(i) A function $f$ with $\int_{\mathbb{T}^d}f=0$ belongs to $W^{-1,\infty}(\mathbb{T}^d)$ if and only if there exists a vector field $V\in L^{\infty}(\mathbb{T}^d)$ such that $f=\text{div} \, V$. We denote 
$$\|f\|_{-1,\infty}=\inf_V\|V\|_{L^{\infty}},\; \text{ where }f=\text{div}\,V.$$
(ii) A vector field $K$ with $\int_{\mathbb{T}^d}K=0$ belongs to $W^{-1,\infty}(\mathbb{T}^d)$ if and only if there exists a matrix field $V \in L^{\infty}(\mathbb{T}^d)$ such that $K=\text{div} \, V$ in the sense that $K_i=\sum_j \partial_j V_{ij}$. We denote 
$$\|K\|_{-1,\infty}=\inf_V\|V\|_{L^{\infty}},\; \text{ where }K=\text{div} \,V.$$
\end{definition}

 As in \cite{jabin2018quantitative}, since $K$ (and possibly $\bm \alpha$) are not smooth, in order to get controlled entropy bounds between $\rho^N$ and $\overline{\rho}^N=\overline{\rho}^{\otimes N}$ we must work with entropy solutions.

\begin{definition}\label{def entropy}
Suppose that $K\in W^{-1,\infty}$ and $\text{div} \, K \in W^{-1,\infty}$, so that there exists a vector field $V \in \linf(\T^d)$ such that $\text{div} \, K = \text{div} \, V$. A continuous map $[0,T] \;\reflectbox{$\in$}\; t \mapsto f_t^N \in \cP_{\abs}(\T^{dN})$ is an entropy solution to \eqref{Liouville foc} on the time interval $[0,T]$ if $f^N$ solves \eqref{Liouville foc} in the sense of distributions, $D_{\bx}f^N(t,\cdot)$ exists in the weak sense for a.e $t\leq T$
and for each $0 \leq t \leq T$,
\begin{align}
&H(f_t^N)+\sum_{i=1}^N\int_0^t\int_{\mathbb{T}^{dN}}\frac{|D_{x^i}f^N(s,\bx)|^2}{f^N(s,\bx)}d\bx ds \nonumber
\\
&\qquad \leq H(f_0^N)+\frac{1}{N}\sum_{i,j=1}^N\int_0^t\int_{\mathbb{T}^{dN}}(\alpha^i(s,\bx)+ F(x^i) + V(x^i-x^j))\cdot D_{x^i}f^N_sd\bx ds.\label{Entineq}
\end{align} 
To indicate the dependence on $\bm \alpha,\; F$ and $K$ we call such a solution an $(\bm \alpha,F,K)$-entropy solution. When $\bm \alpha = 0$ and $f_0^N = \rho_0^{\otimes N}$, we simply say that $f^N$ is an entropy solution of \eqref{Liouville fo}.
\end{definition}

\begin{remark} \label{rem1}
(i) If $\alpha=0$, then we have the definition of entropy solution introduced in \cite{jabin2018quantitative}.\\
(ii) If $f^N$ satisfies \eqref{Liouville foc} in the classical sense, then it is also an entropy solution.\\
(iii) It follows that any $(\alpha,F,K)-$entropy solution is also an $(\alpha+\tilde{\beta},F,K-\beta)$-entropy solution, for any bounded vector field $\beta$, where $\tilde{\beta}$ is a vector field such that $\tilde{\beta}^i(\bx)=\frac{1}{N}\sum_{j\neq i}\beta(x^i-x^j)$.\\
(iv) By virtue of Proposition 1 from \cite{jabin2018quantitative}\footnote{Actually, regularity in time is not addressed in Proposition 1 of \cite{jabin2018quantitative}, but it is straightforward to check using the assumptions on $K$ that any entropy solution in the sense of Jabin and Wang admits a version in $C([0,T]; \cP_{\abs}(\T^{dN}))$.}, if $F$ and $\text{div} F$ are bounded and $K,\text{div}K\in W^{-1,\infty}$, then there exists a $(0,F,K)-$entropy solution. 
\end{remark}

For technical reasons, we at times need to work specifically with entropy solutions of \eqref{Liouville fo} which arise via a suitable mollification procedure. In particular, we fix throughout the paper a standard mollifier $(\rho_{\delta})_{\delta > 0}$ on $\T^d$, and we define $K_{\delta} = K * \rho_{\delta}$, $F_{\delta} = F * \rho_{\delta}$. Then we make the following definition.

\begin{definition} \label{def.admissiblesoln}
    We say that $\rho^N$ is an admissible entropy solution of \eqref{Liouville fo} if it is an entropy solution, and for some $\delta_k \downarrow 0$, we have $\rho^{N,\delta_k}_t \xrightarrow{k \to \infty} \rho^N_t$ weakly for each fixed $t$,
    where $\rho^{N,\delta}$ denotes the unique classical solution of \eqref{Liouville fo} with $F$ replaced by $F_{\delta}$ and $K$ replaced by $K_{\delta}$.
\end{definition}

\subsection{Preliminary Results} In this subsection we state and prove some results that will be useful in the paper. The first is a refinement of a compactness result borrowed from \cite[Proposition 1.2]{daudin2023optimal} for solutions $(m,\alpha)$ to the Fokker-Planck equation:
\be\label{FP}
\partial_t m+\text{div}(\alpha m)-\Delta m=0,
\ee
where $m\in \mathcal{C}([0,T],\mathcal{P}_2(\mathbb{T}^d))$ and $\alpha\in L^2_{dt\otimes m(t)}\left( [0,T]\times \mathbb{T}^d, \R^d\right)$.

\begin{proposition}\label{comp}
Assume that, for all $k\geq 1$, $(m_k,\alpha_k)$ solves the Fokker-Planck equation \eqref{FP} starting from $m_0$ and satisfies the uniform energy estimate
\be \label{est1}
\int_0^T\int_{\mathbb{T}^d} |\alpha_k(t,x)|^2dm_k(t)(x)dt\leq C,
\ee
for some constant $C>0$ independent of $k$. Then, for any $\delta\in (0,1)$, up to taking a subsequence, $(m_k,\alpha_km_k)$ converges in $\mathcal{C}^{\frac{1-\delta}{2}}\left( [0,T];\mathcal{P}_{2-\delta}(\mathbb{T}^d)\right)\times\mathcal{M}([0,T]\times\mathbb{T}^d,\R^d)$ to some $(m,w)$. The curve $m$ is in $\mathcal{C}^{1/2}\left( [0,T],\mathcal{P}_2(\mathbb{T}^d)\right)$, $w$ is absolutely continuous with respect to $m(t)\otimes dt$, for any $t_1,t_2\in [0,T]$ such that $t_1<t_2$ it holds that
\be \label{limitineq}
\int_{t_1}^{t_2}\int_{\mathbb{T}^d} \bigg| \frac{dw}{dm(t)\otimes dt}(t,x)\bigg|^2dm(t)(x)dt\leq \liminf_{k\rightarrow +\infty}\int_{t_1}^{t_2}\int_{\mathbb{T}^d}|\alpha_k(t,x)|^2dm_k(t)(x)dt
\ee
and $(m,\frac{dw}{dm(t)\otimes dt})$ solves \eqref{FP} starting from $m_0$.
\end{proposition}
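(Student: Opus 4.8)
The plan is to follow the standard compactness scheme for Fokker--Planck equations with an $L^2$ drift bound, with attention paid to extracting the slightly stronger regularity claimed (the curve $m$ lands in $\mathcal{C}^{1/2}([0,T];\mathcal{P}_2)$, not merely the lower-order space where convergence takes place). First I would set $w_k = \alpha_k m_k$, viewed as a finite $\R^d$-valued measure on $[0,T]\times\T^d$. By \eqref{est1} and Cauchy--Schwarz, $|w_k|([0,T]\times\T^d) \leq \int_0^T\!\int |\alpha_k|\,dm_k\,dt \leq \sqrt{T}\,\sqrt{C}$, so $(w_k)$ is bounded in total variation; since $\mathcal{M}([0,T]\times\T^d,\R^d)$ is the dual of a separable Banach space, Banach--Alaoglu gives a subsequence with $w_k \rightharpoonup w$ weakly-$*$. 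For the measures $m_k$: the energy bound \eqref{est1} together with the equation \eqref{FP} gives a uniform modulus of continuity for $t\mapsto m_k(t)$ in $\mathcal{P}_2$. Concretely, testing \eqref{FP} against a $1$-Lipschitz $\phi$ and using the heat semigroup / Kantorovich duality one obtains, for $t_1<t_2$,
\begin{align*}
\bd_2(m_k(t_1), m_k(t_2))^2 \leq C' \Big( (t_2-t_1) + (t_2-t_1)\!\!\int_{t_1}^{t_2}\!\!\int_{\T^d}|\alpha_k|^2 dm_k(t)\,dt \Big) \leq C''(t_2-t_1),
\end{align*}
i.e. a uniform $\mathcal{C}^{1/2}$ bound in $\mathcal{P}_2(\T^d)$. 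Since $\mathcal{P}_2(\T^d)$ is compact (the torus is bounded), Arzelà--Ascoli yields a further subsequence with $m_k \to m$ in $\mathcal{C}([0,T];\mathcal{P}_2(\T^d))$, and the uniform Hölder bound passes to the limit, so $m \in \mathcal{C}^{1/2}([0,T];\mathcal{P}_2(\T^d))$. Interpolating the Hölder-$1/2$ bound in $\mathcal{P}_2$ with the trivial uniform bound in $\mathcal{P}_0$ (or arguing directly that $\bd_{2-\delta}\leq \bd_2$) upgrades convergence to $\mathcal{C}^{(1-\delta)/2}([0,T];\mathcal{P}_{2-\delta}(\T^d))$ for every $\delta\in(0,1)$, which is the stated convergence space for $m_k$.

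Next I would pass to the limit in the equation. The drift term $\int\int \nabla\phi\cdot\alpha_k\,dm_k\,dt = \int\int \nabla\phi\cdot dw_k$ converges to $\int\int\nabla\phi\cdot dw$ by weak-$*$ convergence of $w_k$ (for $\phi\in C^\infty$, $\nabla\phi$ is continuous and bounded), and the terms $\partial_t$ and $\Delta$ pass to the limit since $m_k\to m$ in $\mathcal{C}([0,T];\mathcal{P}_2)\hookrightarrow$ distributions; hence $(m,w)$ solves $\partial_t m + \text{div}\, w - \Delta m = 0$ in the sense of distributions, with $m(0)=m_0$ inherited from uniform convergence at $t=0$. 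It remains to show $w \ll m(t)\otimes dt$ with the lower-semicontinuity estimate \eqref{limitineq}; then setting $\alpha = \frac{dw}{dm(t)\otimes dt}$ makes $(m,\alpha)$ a solution of \eqref{FP}. This absolute continuity and the bound \eqref{limitineq} are exactly the content of the classical lower-semicontinuity theorem for the ``action'' functional $(\mu,\nu)\mapsto \int |d\nu/d\mu|^2\,d\mu$ along weakly-$*$ converging pairs (Buttazzo, or the Benamou--Brenier functional lower semicontinuity used in \cite[Proposition 1.2]{daudin2023optimal}): one writes the functional variationally as $\sup\{\int \phi\cdot dw - \tfrac14\int|\phi|^2 dm\}$ over continuous vector fields $\phi$, each term of which is continuous under the joint weak-$*$ convergence $(m_k,w_k)\to(m,w)$, so the sup is lower semicontinuous; restricting the liminf to the subinterval $[t_1,t_2]$ (which is legitimate since the functional is a positive measure in $t$) gives \eqref{limitineq} verbatim, and finiteness of the left side forces $w\ll m(t)\otimes dt$ on each subinterval, hence globally.

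The only point requiring genuine care — and the main obstacle — is the uniform $\mathcal{C}^{1/2}$ estimate in $\mathcal{P}_2(\T^d)$ with a constant depending only on $C$ and $T$ (not on $k$), since this is what is needed to land $m$ in the full space $\mathcal{C}^{1/2}([0,T];\mathcal{P}_2)$ rather than just in the weaker space where the approximants converge; the rest is soft functional analysis plus the cited lower-semicontinuity result. I would obtain this estimate by the duality argument sketched above: for $\phi$ $1$-Lipschitz, $\frac{d}{dt}\int\phi\,dm_k(t) = \int(\Delta\phi + \nabla\phi\cdot\alpha_k)\,dm_k(t)$, and while $\Delta\phi$ is not controlled for merely Lipschitz $\phi$, one circumvents this in the usual way — either by first regularizing $\phi$ at scale $\sqrt{t_2-t_1}$ (so $\|\Delta\phi\|_\infty \lesssim (t_2-t_1)^{-1/2}$, contributing $O(\sqrt{t_2-t_1})$ after integration), or equivalently by using the representation of $\bd_2$-displacement in terms of the stochastic characteristics $dX_t = \alpha_k(t,X_t)\,dt + \sqrt2\,dW_t$ of \eqref{FP} and estimating $\Ex[|X_{t_2}-X_{t_1}|^2] \leq 2(t_2-t_1)\Ex\!\int_{t_1}^{t_2}|\alpha_k|^2 dt + 2d(t_2-t_1)$, where the first expectation is exactly $\int_{t_1}^{t_2}\!\int|\alpha_k|^2\,dm_k(t)\,dt \leq C$. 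This gives $\bd_2(m_k(t_1),m_k(t_2))^2 \leq 2(C+d)(t_2-t_1)$, uniformly in $k$, as required.
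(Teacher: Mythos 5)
Your proposal is correct and follows essentially the same route as the paper: a total-variation bound on $w_k=\alpha_k m_k$ plus Banach--Alaoglu, equicontinuity of $t\mapsto m_k(t)$ from the energy bound, passage to the limit in the equation, and the lower semicontinuity of the action functional $(m,w)\mapsto\int|dw/(dm\otimes dt)|^2\,dm\,dt$ (which the paper invokes as \cite[Theorem 2.34]{ambrosio2000functions} and you reprove via its dual representation) to get absolute continuity and \eqref{limitineq}. The only cosmetic difference is that you obtain $m\in\mathcal{C}^{1/2}([0,T];\mathcal{P}_2)$ from a uniform a priori H\"older bound on the approximants (via the stochastic characteristics), whereas the paper deduces it a posteriori from \eqref{limitineq} and ``standard estimates''; your version makes explicit the estimates the paper leaves implicit.
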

 \begin{proof}
Let $w_n=\alpha_nm_n$. For the total variation $|w_n|$ we have
$$|w_n|\leq\int_0^T\int_{\mathbb{T}^d}\bigg| \frac{dw_n}{dm_n(t)\otimes dt}(t,x)\bigg|dm_n(t)(x)dt\leq \sqrt{T}\bigg(\int_0^T\int_{\mathbb{T}^d}\bigg| \frac{dw_n}{dm_n(t)\otimes dt}(t,x)\bigg|^2dm_n(t)(x)dt \bigg)^{1/2},$$
therefore $|w_n|\leq \sqrt{CT}$. We can therefore use Banach-Alaoglu theorem and, due to standard estimates for the Fokker-Planck equation, we can deduce that, for any $r\in (1,2)$,  $(m_n,w_n)$ converges in $\mathcal{C}([0,T],\mathcal{P}_r(\mathbb{T}^d))\times \mathcal{M}([0,T]\times \mathbb{T}^d,\R^d)$ to some element $(m,w)$. In fact, elementary arguments yield that the convergence also holds also in $\mathcal{C}([t_1,t_2],\mathcal{P}_r(\mathbb{T}^d))\times \mathcal{M}([t_1,t_2]\times \mathbb{T}^d,\R^d)$.

It is straightforward, because of the convergence, that $(m,w)$ satisfies the Fokker-Planck equation starting from $m_0$. By Theorem 2.34 of \cite{ambrosio2000functions} we discover that $w$ is absolutely continuous with respect to $m(t)\otimes dt$ and that \eqref{limitineq} holds. The bound that \eqref{limitineq} provides (when $t_1=0$ and $t_2=T$), yields $m\in \mathcal{C}^{1/2}\left( [0,T],\mathcal{P}_2(\mathbb{T}^d)\right)$, due to standard estimates for the Fokker-Planck equation.
 \end{proof}

 \begin{remark}
 As an artifact of the proof of the above proposition, we get that if $(m_k,\alpha_k)$ satisfies \eqref{est1} and the sequence $(m_k)_{k\in\mathbb{N}}$ is known to converge in $\mathcal{C}^{\frac{1-\delta}{2}}([0,T];\mathcal{P}_{2-\delta}(\mathbb{T}^d))$, then $w_k=\alpha_km_k$ converges in $\mathcal{M}([0,T]\times\mathbb{T}^d;\R^d)$ and \eqref{limitineq} holds as well.
 \end{remark}

We are also going to need the following two elementary lemmas, the first of which can be inferred \cite[Proposition 2.4.2]{dupuis2011weak} (notice that the boundness assumption on $f$ there is not necessary) and the second of which is standard.

 \begin{lemma}\label{lem.entvar}
     Let $X$ be a Polish space,  $\mu\in \mathcal{P}(X)$ and a function $f:X\rightarrow \R\cup \{-\infty,+\infty\}$ such that $e^f\in L^1(\mu)$. Then, the following equality holds
     \be \label{variational}
     \inf_{\nu\in \mathcal{P}(X)}\bigg\{ H(\nu|\mu)-\int_{X}f(x)d\nu(x) \bigg\}=-\log\bigg( \int_{X}e^{f(x)}d\mu(x)\bigg).
     \ee
     In particular, if $A \subset X$ is any Borel set, then 
     \begin{align} \label{generalvarformula}
         - \log \mu(A) = \inf_{\nu \in \cP(X), \, \nu(A) = 1} H(\nu | \mu).
     \end{align}
 \end{lemma}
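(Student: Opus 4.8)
The statement to prove is Lemma \ref{lem.entvar}, the Donsker–Varadhan variational formula for relative entropy, together with its corollary for Borel sets.

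\textbf{Proof proposal.}

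The plan is to prove the variational identity \eqref{variational} by two inequalities, and then derive \eqref{generalvarformula} as a corollary. For the direction ``$\geq$'', I would use the standard change-of-measure bound: for any $\nu \in \cP(X)$ with $H(\nu|\mu) < \infty$, write $g = d\nu/d\mu$, so that $H(\nu|\mu) - \int_X f \, d\nu = \int_X (\log g - f)\, d\nu = \int_X \big(\log g - f\big) g \, d\mu$. Introducing the probability measure $\mu_f$ with density $e^f / Z$ where $Z = \int_X e^f \, d\mu$, one has $\int_X (\log g - f)\, g\, d\mu = \int_X g \log\!\big(g e^{-f} Z\big)\, d\mu - \log Z = H(\nu | \mu_f) - \log Z \geq -\log Z$, since relative entropy is nonnegative. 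When $H(\nu|\mu) = +\infty$ the inequality is trivial (noting $\int_X f \, d\nu$ is well-defined in $[-\infty, \infty)$ or requires a short argument that $\int f^+ d\nu < \infty$ follows from finiteness of entropy via the bound $f^+ \leq e^{f^+} \leq e^f + 1$ when combined with integrability). Taking the infimum over $\nu$ gives ``$\geq -\log Z$''.

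For the direction ``$\leq$'', the natural candidate is $\nu = \mu_f$ itself, which is a legitimate element of $\cP(X)$ precisely because $e^f \in L^1(\mu)$. A direct computation gives $H(\mu_f | \mu) = \int_X \frac{e^f}{Z} \log \frac{e^f}{Z}\, d\mu = \frac{1}{Z}\int_X f e^f \, d\mu - \log Z$ and $\int_X f \, d\mu_f = \frac{1}{Z}\int_X f e^f \, d\mu$, so the difference is exactly $-\log Z$, provided $\int_X f e^f\, d\mu$ is well-defined; since $f e^f$ is bounded below (as $x e^x \geq -1/e$) this is fine, and if $\int f e^f \, d\mu = +\infty$ one approximates $f$ by $f \wedge n$ and passes to the limit, or simply observes the infimum is still attained in the limit. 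This establishes equality. One should remark that \cite[Proposition 2.4.2]{dupuis2011weak} gives this under a boundedness hypothesis on $f$, and the point of citing it with the parenthetical remark is that the argument above removes that hypothesis, needing only $e^f \in L^1(\mu)$.

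For the corollary \eqref{generalvarformula}, given a Borel set $A$ with $\mu(A) > 0$, I would apply \eqref{variational} with $f = \log \ind_A$, interpreting $f = -\infty$ off $A$ (so $e^f = \ind_A \in L^1(\mu)$ and $-\log\int e^f d\mu = -\log \mu(A)$). The infimum on the left is over $\nu$ with $H(\nu|\mu) - \int_X f\, d\nu$ finite, which forces $\nu(A) = 1$ (otherwise $\int_X f \, d\nu = -\infty$ contributes $+\infty$), and on such $\nu$ the term $\int_X f \, d\nu = 0$, so the left side reduces to $\inf_{\nu(A)=1} H(\nu|\mu)$. If $\mu(A) = 0$ both sides are $+\infty$ (the right side being an infimum over an empty set, since $\nu(A)=1$ and $\nu \ll \mu$ are incompatible). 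I expect the only mildly delicate point to be the careful handling of the $\pm\infty$ conventions in $\int f \, d\nu$ and the justification that finiteness of $H(\nu|\mu)$ makes $\int f^+ d\nu$ finite, so that the rearrangements in the ``$\geq$'' direction are valid; everything else is a routine computation.
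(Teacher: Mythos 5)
The paper does not prove this lemma at all: it is quoted from \cite[Proposition 2.4.2]{dupuis2011weak} with the remark that the boundedness assumption on $f$ there can be dropped. Your proposal supplies the standard Donsker--Varadhan tilting argument (lower bound from $H(\nu|\mu)-\int f\,d\nu = H(\nu|\mu_f)-\log Z \ge -\log Z$, upper bound from the tilted measure $\mu_f$ with a truncation $f\wedge n$ when $\int f e^f\,d\mu=+\infty$, and the corollary via $f=\log \ind_A$), which is exactly the argument behind the cited reference, so there is nothing methodologically different to compare; your write-up simply makes explicit the details the paper leaves to the citation, and it is correct in structure.

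One small repair is needed in your justification that $\int f^+\,d\nu<\infty$ when $H(\nu|\mu)<\infty$: the bound $f^+\le e^f+1$ controls $\int f^+\,d\nu$ by $\int e^f\,d\nu$, but the hypothesis only gives $e^f\in L^1(\mu)$, not $e^f\in L^1(\nu)$, so this does not close as written. The correct route is the Fenchel--Young inequality $uv\le e^u+v\log v-v+1$ applied pointwise with $u=f^+$ and $v=d\nu/d\mu$, which yields $\int f^+\,d\nu\le \int e^{f^+}d\mu+H(\nu|\mu)\le 1+\int e^f d\mu+H(\nu|\mu)<\infty$; with this in hand the rearrangement in your ``$\ge$'' direction is legitimate and the rest of the argument goes through.
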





\begin{lemma} \label{convergence lemma}
Let $(\mu_n)_{n\in\mathbb{N}}$ be a sequence of probability measures over $\mathbb{T}^{dN}$ with densities converging weakly in $L^1$ to the probability measure $\mu$. We also suppose that $(A_n)_{n\in\mathbb{N}}$ is an increasing sequence of subsets of $\mathbb{T}^{dN}$ converging to a set $A$. Then, $\lim_n\mu_n(A_n)=\mu(A)$.
\end{lemma}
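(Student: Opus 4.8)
The plan is to prove Lemma \ref{convergence lemma} by splitting $\mu_n(A_n) - \mu(A)$ into two pieces, one controlling the change of domain and one the change of measure, and to show each vanishes.

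\medskip

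\textbf{Setup and decomposition.} Write $f_n$ for the density of $\mu_n$ and $f$ for the density of $\mu$; by hypothesis $f_n \to f$ weakly in $L^1(\T^{dN})$, and $A_n \uparrow A$ (so $\mathbf{1}_{A_n} \uparrow \mathbf{1}_A$ pointwise, and in particular $\mathbf{1}_{A_n} \to \mathbf{1}_A$ in $L^1$ by dominated convergence, since the measure space is finite). I would write
$$\mu_n(A_n) - \mu(A) = \int_{\T^{dN}} f_n \, \mathbf{1}_{A_n} \, d\bx - \int_{\T^{dN}} f \, \mathbf{1}_A \, d\bx = \underbrace{\int (f_n - f)\, \mathbf{1}_A \, d\bx}_{(\mathrm{I})} + \underbrace{\int f_n (\mathbf{1}_{A_n} - \mathbf{1}_A)\, d\bx}_{(\mathrm{II})}.$$
Term $(\mathrm{I})$ tends to $0$ directly from weak $L^1$ convergence of $f_n$ to $f$, since $\mathbf{1}_A \in L^\infty(\T^{dN})$ is a fixed test function.

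\medskip

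\textbf{Controlling the second term via equi-integrability.} For $(\mathrm{II})$, note $|\mathbf{1}_{A_n} - \mathbf{1}_A| = \mathbf{1}_{A \setminus A_n}$, so $|(\mathrm{II})| \leq \int_{A \setminus A_n} f_n \, d\bx = \mu_n(A \setminus A_n)$. The sets $A \setminus A_n$ decrease to $\emptyset$, so their Lebesgue measure tends to $0$; the point is to upgrade this to $\mu_n(A \setminus A_n) \to 0$ uniformly in $n$. This is exactly where the weak $L^1$ convergence matters: a weakly convergent sequence in $L^1$ is, by the Dunford--Pettis theorem, uniformly integrable (equi-integrable). Hence for every $\eps > 0$ there is $\delta > 0$ such that $\mathrm{Leb}(E) < \delta$ implies $\sup_n \int_E f_n \, d\bx < \eps$. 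Choosing $n$ large enough that $\mathrm{Leb}(A \setminus A_n) < \delta$ gives $|(\mathrm{II})| < \eps$. Combining, $\mu_n(A_n) \to \mu(A)$.

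\medskip

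\textbf{Main obstacle.} The only subtle point is term $(\mathrm{II})$: one cannot simply pass to the limit inside because the integrand involves $f_n$ (which only converges weakly, not strongly) against an indicator of a shrinking set. The resolution is the equi-integrability supplied by Dunford--Pettis, which is the crux of the argument; everything else is routine. (If one prefers to avoid invoking Dunford--Pettis, an alternative is to use that $A \setminus A_n \downarrow \emptyset$ together with monotone convergence applied to a dominating equi-integrable envelope, but the Dunford--Pettis route is cleanest.)
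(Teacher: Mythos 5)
Your proof is correct. The paper states this lemma without proof (calling it ``standard''), and your argument---splitting into a weak-convergence term tested against the fixed indicator $\mathbf{1}_A$ and a remainder bounded by $\mu_n(A\setminus A_n)$, which is killed by the uniform integrability of $(f_n)$ supplied by Dunford--Pettis---is exactly the expected one.
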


\section{Proofs of the main results}

\subsection{Proof of Theorem \ref{thm.apriori}}
To prove Theorem \ref{thm.apriori}, we are going to need the following variational lower bound for $(0,K)$-entropy solutions.

\begin{proposition}\label{varformula}
Let the hypotheses of Theorem \ref{thm.apriori} hold. Let $\rho^N$ be an entropy solution of \eqref{Liouville fo} in the sense of Definition \ref{def entropy} and $A$ an open subset of $\mathbb{T}^{dN}$. Then, the following inequality holds
\be \label{varlowerbound}
-\frac{1}{N}\log \rho^{N}_T(A)\geq \inf_{\bm \alpha, f: f_T(A)=1}\bigg\{ \frac{1}{N}H(f_0|\rho_0^N)+\frac{1}{4N}\sum_{i=1}^N\int_0^T\int_{\mathbb{T}^{d}}|\alpha^{i}(t,\bx)|^2df_t( \bx)dt\bigg\},
\ee
where the infimum is taken over all $f \in C([0,T] ; \cP(\T^d))$ and $\bm \alpha=(\alpha^1,...,\alpha^N):[0,T]\times\mathbb{T}^{dN}\rightarrow \R^{dN}$ such that $f$ is a $(\bm \alpha,K)$-entropy solutions in the sene of Definition \ref{def entropy}.  
\end{proposition}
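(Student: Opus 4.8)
The plan is to lift everything to path space and extract the competitor $(\bm\alpha,f)$ from a near-optimal change of measure via Girsanov's theorem. Since $K$ and $F$ are bounded and the noise in \eqref{system} is non-degenerate, the particle SDE with $\bX_0\sim\rho_0^{\otimes N}$ is well posed in law; its time marginals solve \eqref{Liouville fo} distributionally, and as weak solutions of this equation (uniformly parabolic, bounded drift) are unique, $\rho^N$ coincides with these marginals. Let $\P^N$ denote the law of $(\bX_t)_{t\in[0,T]}$ on the Polish space $\Omega_0\coloneqq C([0,T];\T^{dN})$ and $e_T\colon\Omega_0\to\T^{dN}$ the evaluation at time $T$, so that $\rho^N_T=(e_T)_{\#}\P^N$ and $\rho^N_T(A)=\P^N(e_T^{-1}(A))$, where $e_T^{-1}(A)$ is Borel since $A$ is open. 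Applying Lemma \ref{lem.entvar} on $\Omega_0$ (in the form \eqref{generalvarformula}),
\[
-\log\rho^N_T(A)=\inf\big\{\,H(\mathbb{Q}\,|\,\P^N)\ :\ \mathbb{Q}\in\cP(\Omega_0),\ \mathbb{Q}\big(e_T^{-1}(A)\big)=1\,\big\}.
\]

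To bound the right-hand side, I would fix $\eta>0$ and take $\mathbb{Q}$ with $\mathbb{Q}(e_T^{-1}(A))=1$ and $H(\mathbb{Q}|\P^N)\le-\log\rho^N_T(A)+\eta N$, so in particular $\mathbb{Q}\ll\P^N$. By Girsanov's theorem there is a progressively measurable $\bm\beta=(\beta^1,\dots,\beta^N)$ with $\Ex^{\mathbb{Q}}[\sum_i\int_0^T|\beta^i_t|^2\,dt]<\infty$ such that, under $\mathbb{Q}$, the canonical process obeys $dX^i_t=(\beta^i_t+F(X^i_t)+\tfrac1N\sum_{j\neq i}K(X^i_t-X^j_t))\,dt+\sqrt2\,dB^i_t$ with $X_0\sim\mathbb{Q}_0$, and
\[
H(\mathbb{Q}|\P^N)=H(\mathbb{Q}_0\,|\,\rho_0^{\otimes N})+\tfrac14\,\Ex^{\mathbb{Q}}\Big[\textstyle\sum_{i=1}^N\int_0^T|\beta^i_t|^2\,dt\Big].
\]
Set $f_t\coloneqq\mathbb{Q}_t$ and $\alpha^i(t,\bx)\coloneqq\Ex^{\mathbb{Q}}[\beta^i_t\mid X_t=\bx]$ (a $dt\otimes f_t$-version). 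Then $f_T(A)=\mathbb{Q}_T(A)=1$, $H(f_0\,|\,\rho_0^N)=H(\mathbb{Q}_0\,|\,\rho_0^{\otimes N})$, and Jensen's inequality (conditional expectation is an $L^2$-contraction) gives $\sum_i\int_0^T\int_{\T^{dN}}|\alpha^i|^2\,df_t\,dt\le\sum_i\int_0^T\Ex^{\mathbb{Q}}[|\beta^i_t|^2]\,dt$; combining the last two displays,
\[
\tfrac1N H(f_0\,|\,\rho_0^N)+\tfrac1{4N}\textstyle\sum_{i=1}^N\int_0^T\int_{\T^{dN}}|\alpha^i|^2\,df_t\,dt\ \le\ \tfrac1N H(\mathbb{Q}|\P^N)\ \le\ -\tfrac1N\log\rho^N_T(A)+\eta.
\]

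It remains to check that $(\bm\alpha,f)$ is an admissible competitor for the infimum in \eqref{varlowerbound}, i.e.\ that $f$ is a $(\bm\alpha,K)$-entropy solution of \eqref{Liouville foc}; granting this, the displayed bound shows the infimum is $\le-\tfrac1N\log\rho^N_T(A)+\eta$, and $\eta\downarrow0$ yields \eqref{varlowerbound}. Applying Itô's formula to $\phi(X_t)$ for $\phi\in C^\infty(\T^{dN})$ and conditioning on $X_t$ shows that $f$ solves \eqref{Liouville foc} distributionally with control $\bm\alpha$. Moreover $(e_t)_{\#}\mathbb{Q}=f_t$ and $(e_t)_{\#}\P^N=\rho^N_t$, so the data-processing inequality gives $H(f_t\,|\,\rho^N_t)\le H(\mathbb{Q}|\P^N)<\infty$; since the density of $\rho^N_t$ is bounded on $[0,T]\times\T^{dN}$ (parabolic regularity, using boundedness of $K$, $F$ and of the density of $\rho_0$, and $T<\infty$), it follows that $\sup_t H(f_t)<\infty$, whence $f\in C([0,T];\cP_{\abs}(\T^{dN}))$ with $D_{\bx}f_t\in L^1$ for a.e.\ $t$. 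Finally, the entropy inequality \eqref{Entineq} follows from the usual entropy-dissipation computation along \eqref{Liouville foc} — made rigorous by mollifying in space and passing to the limit, legitimate because $K,F\in L^\infty$ and $\sum_i\int_0^T\int_{\T^{dN}}|\alpha^i|^2\,df_t\,dt<\infty$ — together with the identity $\int_{\T^{dN}}K(x^i-x^j)\cdot D_{x^i}f_t\,d\bx=\int_{\T^{dN}}V(x^i-x^j)\cdot D_{x^i}f_t\,d\bx$ (valid whenever $D_{x^i}f_t\in L^1$, since $\mathrm{div}\,K=\mathrm{div}\,V$). I expect this last verification — that the Girsanov-tilted marginal flow, carrying only an $L^2$ projected control $\bm\alpha$ and a kernel $K$ that is merely bounded, genuinely satisfies the technical entropy inequality of Definition \ref{def entropy} — to be the main obstacle; the rest is a clean combination of the entropy variational formula, Girsanov's theorem, and Jensen's inequality.
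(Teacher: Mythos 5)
Your overall skeleton (path-space entropy variational formula, Girsanov decomposition of $H(\mathbb{Q}|\P^N)$, then projection to a Markovian control via mimicking/Jensen) is the same idea that underlies Step 1 of the paper's proof, and the logic ``exhibit one admissible competitor with cost at most $-\tfrac1N\log\rho^N_T(A)+\eta$'' is the right way to read \eqref{varlowerbound}. The genuine gap is exactly at the point you flag as ``the main obstacle'': you must show that the tilted marginal flow $f_t=\mathbb{Q}_t$ with the projected drift $\bm\alpha$ is an \emph{entropy} solution in the sense of Definition \ref{def entropy}, i.e.\ that $D_{x^i}f_t$ exists, that the time-integrated Fisher information $\sum_i\int_0^t\int |D_{x^i}f|^2/f$ is finite, and that the inequality \eqref{Entineq} holds. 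Your argument for this is not valid as written: from $H(\mathbb{Q}|\P^N)<\infty$ and boundedness of the density of $\rho^N_t$ you correctly get $\sup_t H(f_t)<\infty$, but finite entropy gives only integrability of $f\log f$ --- it does \emph{not} imply that $f_t$ has weak derivatives in $L^1$, let alone finite Fisher information, so the chain ``$\sup_t H(f_t)<\infty$ whence $D_{\bx}f_t\in L^1$'' breaks down. Likewise, the proposed ``mollify in space and pass to the limit'' is problematic: mollifying $f$ destroys the equation (the commutator with a drift that is merely in $L^2(df_t\,dt)$ is not controllable), and mollifying the coefficients does not help because with an $L^2$ drift the Fokker--Planck equation need not have unique weak solutions, so you cannot identify your specific $f$ (the marginal of $\mathbb{Q}$) with the limit of the regularized solutions. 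This is precisely the technical difficulty the paper's Remark about requiring only \emph{entropy} solutions refers to, and it is why the paper does not use the mimicked pair directly as the competitor.

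The paper closes this gap by a different, more roundabout construction: with $K$ smooth and the hard constraint replaced by a smooth penalization $G_\delta$, the stochastic-control/HJB structure yields \emph{smooth} $\eps$-minimizers $(\bm\alpha^\delta,f^\delta)$, which are classical (hence entropy) solutions; uniform bounds on the cost, on $H(f^\delta_t)$ and on the Fisher information are extracted from \eqref{Entineq} itself, and Proposition \ref{comp} plus lower semicontinuity of entropy and of the $L^2$ cost allow passing \eqref{Entineq} to the limit $\delta\to0$ (the matching of limits \eqref{conv3}--\eqref{conv6} is what makes the liminf of the right-hand side of \eqref{ineq} computable); a final mollification argument in $K$ (Step 3, using Remark \ref{rem1}(iii) and the uniform entropy bound to show the error terms vanish) removes the smoothness of $K$. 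If you want to salvage your more direct route, you would need an independent result guaranteeing that any finite-entropy Girsanov tilt of this diffusion has marginals with finite integrated Fisher information satisfying \eqref{Entineq} (a F\"ollmer-type time-reversal argument), which is a substantive additional input, not a routine mollification. A smaller point: your identification of $\rho^N$ with the law of the SDE solution uses uniqueness of measure-valued weak solutions for bounded drift (superposition), which should be cited; the paper instead only ever works with smooth $K$ before taking limits.
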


\begin{proof}
We give the proof only when $F = 0$ for simplicity, because the proof with $F \neq 0$ is the same but more notationally cumbersome.
\\
\\
\textit{Step 1 ($K$ is smooth and terminal condition is mollified).} Assume that $K$ is smooth. We consider the function $G$ with $G(x)=0$ if $x\in A$ and $G(x)=+\infty$ if $x\in A^c$, and, for $\delta\in (0,1)$, let $G_{\delta}$ be a smooth function such that $G_{\delta}(x) = 0$ on $A$ and $\frac{2}{\delta}\geq G_{\delta}(x) > 1/\delta$ if $\text{dist}(A) > \delta$.

Since $K$ is assumed to be smooth, \eqref{system} is strongly uniquely solvable, therefore we can consider its solution ${\bf X}_{\cdot}^N$ and its law (under $\mathbb{P}$) $\mathcal{L}({\bf X}_{\cdot}^N)$ in the path space $\mathcal{C}([0,T];\mathbb{T}^{dN})$. By using Lemma \ref{lem.entvar} and Girsanov's Theorem as in the proof of Theorem 4.1 of \cite{boue1998variational}\footnote{The only difference is that we have a random initial condition, which leads to the additional term in the right-hand side of \eqref{boue}, and the fact that we work on the torus, which is no problem because a version of \eqref{boue} on the torus easily follows from the corresponding version on $\R^d$.} , we have
\vspace{-1mm}
\begin{align} \label{boue}
    - \frac{1}{N} \log \mathbb{E}\bigg[e^{- G_{\delta}(X_T)} \bigg] &= \frac{1}{N}\inf_{Q\ll \mathcal{L}({\bf X}_{\cdot})}\bigg\{ H(Q|\mathcal{L}({\bf X}_{\cdot}))+\int _{\mathcal{C}([0,T];\mathbb{T}^{dN})}G_{\delta}(\omega_T)dQ(\omega)\bigg\} 
    \nonumber \\
    &= \frac{1}{N} \inf_{\bm \alpha, \bY} \bigg\{ H\big(\sL(\bY_0) | \sL(\bX_0) \big) + \mathbb{E}\Big[ \frac{1}{4} \sum_{i = 1}^N \int_0^T |\alpha_t^i|^2 dt + G_{\delta}(\bY_T) \Big] \bigg\}, 
\end{align}
where the second infimum is taken over all pairs consisting of a square-integrable adapted $\R^{dN}$-valued process $\bm \alpha = (\alpha^1,...,\alpha^N)$ and a continuous process $\bm Y$ satisfying
\begin{align*}
    dY_t^i = \bigg( \alpha_t^i + \frac{1}{N} \sum_{k \neq i} K(Y_t^i- Y_t^j) \bigg) dt + \sqrt{2} dW_t^i, \quad i = 1,...,N. 
\end{align*}

Furthermore, by the mimicking theorem \cite[Corollary 3.7]{brunick2013mimicking}, for any such $(\bm\alpha, \bY)$, there exists a measurable function $\bm\alpha=(\alpha^1,...,\alpha^N):[0,T]\times \T^{dN}\rightarrow \R^{dN}$ and a process $\tilde{\bm Y}$ such that $\tilde{\bm Y}$ is a weak solution of 
\begin{align*}
    d\tilde{Y}_t^i = \bigg( \alpha^i(t,\tilde{\bY}_t) + \frac{1}{N} \sum_{j \neq i} K(\tilde{Y}_t^i - \tilde{Y}_t^j) \bigg)dt + \sqrt{2} dW_t^i,\quad i=1,...,N,
\end{align*}
$\mathbb{E}\Big[\int_0^T |\alpha^i(t,\tilde{\bY}_t)|^2 dt \Big] \leq \mathbb{E}\Big[ \int_0^T |\alpha_t^i|^2 dt \Big]$, and for any $t\in [0,T]$, $\sL(\tilde{Y}_t) = \sL(Y_t)$. Moreover, it is clear that for any such $\tilde{\bY}$, its law $f_t = \sL(\tilde{\bm Y}_t)$ must be a weak solution of \eqref{Liouville fo}. 
Combining the last observations, we deduce that 

\begin{align} \label{alphafrep}
-\frac{1}{N}\log \mathbb{E}\bigg[ e^{-G_{\delta}({\bf X}_T)}\bigg] \geq  \frac{1}{N}\inf_{\bm \alpha,f}\bigg\{ H(f_0|\rho^N_0)+\frac{1}{4}\sum_{i=1}^N\int_0^T\int_{\mathbb{T}^{dN}}|\alpha^i(t,\bx)|^2df_t( \bx)dt+\int_{\mathbb{T}^{dN}}G_{\delta}(\bx)df_T( \bx)\bigg\}
\end{align} 
where the infimum is over pairs $(\bm \alpha, f)$ such that $f$ is a weak solution of \eqref{Liouville foc}. 

Our next goal is to argue that we can restrict the infimum in \eqref{alphafrep} to smooth solutions of \eqref{Liouville foc}. For this, we note that by considering the infimum in \eqref{alphafrep} first with respect to $f_0$ and then with respect to $\bm \alpha$, we see that we have 
\begin{align} \label{alphafrep2}
    -\frac{1}{N}\log \mathbb{E}\bigg[ e^{-G_{\delta}({\bf X}_T)}\bigg] =\frac{1}{N}\inf_{f_0 \ll \rho_0^N}\bigg\{ H(f_0|\rho^N_0) + \int_{\T^{dN}} V^{\delta}(\bx) df_0(\bx)\bigg\},
\end{align}
with $V^{\delta}(\bx)$ being the value function of a standard stochastic control problem, and in particular $V^{\delta}(\bx) = u^{\delta}(0,\bx)$, where $u^{\delta}$ solves 
\be\label{HJ1} 
-\partial u^{\delta} -\Delta u^{\delta} + |D_{\bx}u^{\delta} |^2 +\frac{1}{N}\sum_{i=1}^N\sum_{j\neq i}K(x^i-x^j)\cdot D_{x^i}u^{\delta} =0, \quad (t,\bx)\in [0,T)\times \T^{dN}
\ee
with terminal conditions $u^{\delta} (T,\bx)= G_{\delta}(\bx)$.
Moreover, the theory of stochastic control also tells us that the optimal feedback $\bm \alpha$ in \eqref{alphafrep} is independent of $f$, and takes the form $\alpha^{\delta,i}(t,\bx) = - 2 D_{x^i} u^{\delta}(t, \bx)$. By parabolic regularity and the smoothness of $K$ and $G_{\delta}$, $V^{\delta}$ and $\alpha^{\delta, i}$ are smooth. In particular, $f_0 \mapsto \int V^{\delta}(\bx) df_0(\bx)$ is continuous, so for any $\eps > 0$, the minimization problem \eqref{alphafrep2} admits a smooth $\eps-$minimizer $f_0^{\eps}$. If $f^{\eps}$ is the weak solution of \eqref{Liouville foc} driven by $K$ and $\bm  \alpha^{\delta}$ and starting from $f_0^{\eps}$, we deduce that $(\bm \alpha^{\delta}, f^{\eps})$ is a smooth $\eps$-minimizer for the infimum in \eqref{alphafrep}. In particular, this shows that \eqref{alphafrep} remains true when the infimum in the right-hand side is restricted to pairs $(\bm \alpha, f)$ such that $\bm \alpha$ is smooth and $f$ is a classical (hence entropy) solution of \eqref{Liouville foc}.
\newline \newline 
\textit{Step 2 ($K$ is smooth)}:
  The goal in this step is to take $\delta \to 0$ in \eqref{alphafrep}. By the previous step, we can find for each $\delta > 0$ a pair $(\bm \alpha^{\delta}, f^{\delta})$ such that $\bm \alpha^{\delta}$ is smooth, $f^{\delta}$ is a classical solution of \eqref{Liouville foc} and 
  \begin{align}
-\frac{1}{N}\log \mathbb{E}\bigg[ e^{-G_{\delta}({\bf X}_T)}\bigg]\geq\frac{1}{N}H(f_0^{\delta}|\rho^N_0)+\frac{1}{4N}\sum_{i=1}^N\int_0^T\int_{\mathbb{T}^{dN}}|\alpha^{i,\delta}(t,\bx)|^2df^{\delta}_t(\bx)dt+\frac{1}{N}\int_{\mathbb{T}^{dN}}G_{\delta}({ \bf x})df_T^{\delta}(\bx)-\delta.\label{3.1.4}
\end{align}
Using \eqref{Entineq} and applying Cauchy-Schwartz, it follows that
\begin{align}
H(f^{\delta}_t)+ &\int_0^t I(f^{\delta}_s) ds \label{3.1.5}  \leq
\sum_{i,j=1}^N\frac{1}{N}\bigg( \int_0^t\int_{\mathbb{T}^{dN}}\frac{|D_{x^i}f^{\delta}|^2}{f^{\delta}}d\bx ds\bigg)^{1/2}\bigg( \int_0^t\int_{\mathbb{T}^{dN}}|V(x^i-x^j)|^2df^{\delta}(\bx)ds\bigg)^{1/2} \nonumber \\
&+H(f_0^{\delta})+\sum_{i=1}^N\bigg( \int_0^t\int_{\mathbb{T}^{dN}}\frac{|D_{x^i}f^{\delta}|^2}{f^{\delta}}d\bx ds\bigg)^{1/2}\bigg(\int_0^t\int_{\mathbb{T}^{dN}}|\alpha^{i,\delta}(t,\bx)|^2df_t^{\delta}(\bx)dt\bigg)^{1/2}.
\end{align}
However, $G_{\delta}$ converges to $G$, so by the bounded convergence theorem, the left hand side of \eqref{3.1.4} converges and it is, therefore, bounded. Since $H(f_0^{\delta}|\rho_0^N)$, $G_{\delta}$ and $\frac{1}{N}\sum_{i=1}^N\int_0^t\int_{\mathbb{T}^{dN}}|\alpha^{i,\delta}(t,\bx)|^2df_t^{\delta}(\bx)dt$  are nonnegative, this implies that the terms on the right hand side of \eqref{3.1.4} are also uniformly bounded (independently of $\delta$). Since $H(f_0^{\delta}|\rho^N_0)$ is uniformly bounded, due to the upper bound of $\rho^N_0$, we also get that $H(f_0^{\delta})$ is uniformly bounded. Combining these facts with $V\in L^{\infty}$, we find that there exist constants $C_1,C_2$ with $C_2>0$, which are independent of $\delta$, such that \eqref{3.1.5} becomes
$$C_1+\int_0^t\int_{\mathbb{T}^{dN}}\frac{|D_{x^i}f^{\delta}|^2}{f^{\delta}}d\bx ds\leq C_2\bigg( \int_0^t\int_{\mathbb{T}^{dN}}\frac{|D_{x^i}f^{\delta}|^2}{f^{\delta}}d\bx ds\bigg)^{1/2}.$$
This clearly implies that $\int_0^t\int_{\mathbb{T}^{dN}}\frac{|D_{x^i}f^{\delta}|^2}{f^{\delta}}d\bx ds$, $i=1,...,N$, are also uniformly bounded. 

\noindent
We can, now, apply Proposition \ref{comp} in various ways. By virtue of the uniform boundness of $\int_0^T\int_{\mathbb{T}^{dN}}\frac{|D_{x^i}f^{\delta}|^2}{f^{\delta}}d\bx ds$, $\int_0^T\int_{\mathbb{T}^{dN}}|\alpha^{i,\delta}(t,\bx)|^2df_t^{\delta}(\bx)dt$, we get that as $\delta\rightarrow 0$, up to subsequences
\begin{align}
f^{\delta}\rightarrow f\;\; &\text{ in } \mathcal{C}([0,T];\mathcal{P}(\mathbb{T}^{dN})),\label{conv1}\\
\alpha^{i,\delta}f^{\delta}\rightarrow \alpha^if, \,\, D_{x^i}f^{\delta}\rightarrow D_{x^i}f\;\; &\text{ in }\mathcal{M}([0,t]\times \mathbb{T}^{dN};\R^{dN}),\; i=1,...,N, \;\; t \in (0,T]  \label{conv4}
\end{align}
In addition, by \eqref{3.1.5}, we derive that $H(f^{\delta}_t)$ is uniformly bounded independently of $t$, so $\int_0^TH(f^{\delta}(t))dt$ is uniformly bounded. The Vall\'ee–Poussin theorem \cite[Theorem 4.5.9]{bogachev2007measure}  implies that $f^{\delta}$ is uniformly integrable over $[0,T]\times \mathbb{T}^{dN}$, hence by the Dunford-Pettis theorem \cite[Theorem 4.7.18]{bogachev2007measure}, the convergence in \eqref{conv1} also holds weakly (again up to subsequences):
\begin{align}
f^{\delta}\rightarrow f &\text{ in } \mathcal{C}([0,T];\mathcal{P}(\mathbb{T}^{dN}))\text{ and weakly in }L^1([0,T]\times \mathbb{T}^{dN}),\\
&f_0^{\delta}\rightarrow f_0\text{ and }f_T^{\delta}\rightarrow f_T\text{ weakly in }L^1(\mathbb{T}^{dN}).
\end{align}
We now pass to the limit $\delta\rightarrow 0$ in \eqref{3.1.4}. By the weak convergence of $f_0^{\delta}$, the weak lower semicontinuity of the relative entropy, Proposition \ref{comp} and the nonnegativity of $G_{\delta}$ we deduce 
\be 
-\frac{1}{N}\log\rho^N_T(A)=-\frac{1}{N}\log \mathbb{E}\bigg[ e^{-G(X_T)}\bigg]\geq H(f_0|\rho^N_0)+\frac{1}{4N}\sum_{i=1}^N\int_0^T\int_{\mathbb{T}^{dN}}|\alpha^i(t,\bx)|^2df_t( \bx)dt,
\ee
hence in order to prove \eqref{varlowerbound}, it suffices to show that $(f,\alpha)$ is an admissible candidate for the infimum on its right hand side.
\vspace{2mm}

\noindent
We will start by showing that $f_T(A)=1$ or, equivalently, $f_T(A^c)=0$. Indeed, the family of sets $A_{\delta}=\{x\in\mathbb{T}^{dN}:G_{\delta}(x)\geq \frac{2}{\delta}\}$ is increasing and converges to $A^c$ as $\delta\rightarrow 0$. We have by Markov's inequality
$f_T^{\delta}(A_{\delta})\leq \frac{\delta}{2}\int_{\mathbb{T}^{dN}}G_{\delta}(\bx)df_T^{\delta}(\bx)$, thus $\lim_{\delta\rightarrow 0}f_T^{\delta}(A_{\delta})\leq 0,$
because $\int_{\mathbb{T}^{dN}}G_{\delta}(\bx)df_T^{\delta}(\bx)$ is uniformly bounded. But since $A_{\delta}$ is increasing and $f_T^{\delta}$ converges weakly in $L^1$ to $f_T$, Lemma \ref{convergence lemma} and the last inequality imply $f_T(A^c)=0$, which is what we wanted.
\vspace{2mm}

\noindent
We now prove that $f$ is an $(\bm \alpha,0,K)-$entropy solution. It is straightforward to check that the limit $(f,\bm \alpha)$ satisfies \eqref{Liouville foc} in the weak sense. Thus, it is an admissible candidate for the first infimum in \eqref{alphafrep}, hence
\vspace{-2mm}
\begin{align*}
H(f_0|\rho^N_0)+\frac{1}{4}\sum_{i=1}^N\int_0^T\int_{\mathbb{T}^{dN}}&|\alpha^i(t,\bx)|^2df_t( \bx)dt+\int_{\mathbb{T}^{dN}}G_{\delta}(\bx)df_T( \bx)+\delta \\
&\geq H(f_0^{\delta}|\rho^N_0)+\frac{1}{4}\sum_{i=1}^N\int_0^T\int_{\mathbb{T}^{dN}}|\alpha^{i,\delta}(t,\bx)|^2df^{\delta}_t(\bx)dt+\int_{\mathbb{T}^{dN}}G_{\delta}(\bx)df^{\delta}_T(\bx)
\end{align*}
We pass to the limit as $\delta\rightarrow 0$ and by the weak lower semi-continuity of the entropy, Proposition \ref{comp} and the fact that $f_T$ is supported on $A$, we get
\begin{align*}
    H(f_0|\rho^N_0)+\frac{1}{4}\sum_{i=1}^N\int_0^T&\int_{\mathbb{T}^{dN}}|\alpha^i(t,\bx)|^2df_t( \bx)dt\geq \liminf_{\delta\rightarrow 0}\bigg(H(f_0^{\delta}|\rho^N_0)+\frac{1}{4}\sum_{i=1}^N\int_0^T\int_{\mathbb{T}^{dN}}|\alpha^{i,\delta}(t,\bx)|^2df^{\delta}_t(\bx)dt\bigg)\\
    &\geq \liminf_{\delta\rightarrow 0}H(f_0^{\delta}|\rho^N_0)+ \liminf_{\delta\rightarrow 0}\frac{1}{4}\sum_{i=1}^N\int_0^T\int_{\mathbb{T}^{dN}}|\alpha^{i,\delta}(t,\bx)|^2df^{\delta}_t(\bx)dt\\
    &\geq H(f_0|\rho^N_0)+\frac{1}{4}\sum_{i=1}^N\int_0^T\int_{\mathbb{T}^{dN}}|\alpha^i(t,\bx)|^2df_t( \bx)dt.
\end{align*}
\vspace{-2mm}
We deduce that
\begin{align}
    \liminf_{\delta\rightarrow 0}H(f_0^{\delta}|\rho^N_0)&=H(f_0|\rho^N_0)\label{conv3}\\
    \liminf_{\delta\rightarrow 0}\frac{1}{4}\sum_{i=1}^N\int_0^T\int_{\mathbb{T}^{dN}}|\alpha^{i,\delta}(t,\bx)|^2df^{\delta}_t(\bx)dt&=\frac{1}{4}\sum_{i=1}^N\int_0^T\int_{\mathbb{T}^{dN}}|\alpha^i(t,\bx)|^2df_t( \bx)dt.\label{conv6}
\end{align}
Since $f^{\delta}$ is a smooth $(\bm \alpha^{\delta},K)$-entropy solution, \eqref{Entineq} holds for every $t\in [0,T]$ and can be rewritten as
\begin{align}
H(f_0^{\delta})+\frac{1}{2}\sum_{i=1}^N&\int_0^T\int_{\mathbb{T}^{dN}}|\alpha^{i,\delta}(s,\bx)|^2df_s^{\delta}(\bx)ds+\frac{1}{N}\sum_{i,j=1}^N\int_0^t\int_{\mathbb{T}^{dN}}V(x^i-x^j)\cdot D_{x^i}f^{\delta}(s,\bx)d\bx ds\nonumber\\
& \geq \frac{1}{2}\sum_{i=1}^N\int_t^T\int_{\mathbb{T}^{dN}}|\alpha^{i,\delta}(s,\bx)|^2df_s^{\delta}(\bx)ds+\frac{1}{2}\sum_{i=1}^N\int_0^t\int_{\mathbb{T}^{dN}}\bigg|\alpha^{i,\delta}(s,\bx)-\frac{D_{x^i}f^{\delta}}{f^{\delta}}\bigg|^2df^{\delta}_s(\bx)ds\nonumber\\
&\quad +H(f^{\delta}_t)+\frac{1}{2}\sum_{i=1}^N\int_0^t\int_{\mathbb{T}^{dN}}\frac{|D_{x^i}f^{\delta}|^2}{f^{\delta}}d\bx ds.\label{ineq}
\end{align}

\noindent
We observe that since $H(f_t^{\delta})$ is uniformly bounded, by passing to a further subsequence if necessary, we get $f_t^{\delta}\rightarrow f_t$ weakly in $L^1(\T^{dN})$. Due to the lower semi-continuity of the entropy, Proposition \ref{comp} and the remark after Proposition \ref{comp}, we get that the $\liminf_{\delta\rightarrow 0}$ of the right hand side of \eqref{ineq} is at least
\begin{align}
H(f_t)+&\frac{1}{2}\sum_{i=1}^N\int_0^t\int_{\mathbb{T}^{dN}}\bigg|\alpha^{i}(s,\bx)-\frac{D_{x^i}f}{f}\bigg|^2df_s( \bx)ds\nonumber\\
&+\frac{1}{2}\sum_{i=1}^N\int_t^T\int_{\mathbb{T}^{dN}}|\alpha^{i}(s,\bx)|^2df_s( \bx)ds+\frac{1}{2}\sum_{i=1}^N\int_0^t\int_{\mathbb{T}^{dN}}\frac{|D_{x^i}f|^2}{f}d\bx ds.\label{ineq2}
\end{align}
On the other hand, because of the convergences \eqref{conv4}, \eqref{conv3} and \eqref{conv6}, the left hand side of \eqref{ineq} converges to
\be\label{ineq3}
H(f_0)+\frac{1}{2}\sum_{i=1}^N\int_0^T\int_{\mathbb{T}^{dN}}|\alpha^{i}(s,\bx)|^2df_s( \bx)ds+\frac{1}{N}\sum_{i,j=1}^N\int_0^t\int_{\mathbb{T}^{dN}}V(x^i-x^j)\cdot D_{x^i}f(s,\bm x)d\bx ds,
\ee
so that $(f,\bm \alpha)$ satisfies \eqref{Entineq} for each $t\in [0,T]$. 
\\ 
\\
\noindent
\textit{Step 3 ($K \in \linf$).} Assume now that $K\in L^{\infty}$. We consider $K^r$ to be a family of mollifications of $K$. For any $r>0$, by step 2, \eqref{varlowerbound} holds when $K=K^r$. We denote by $V_r$ the right-hand side of \eqref{varlowerbound} with $K$ replaced by $K^r$; so that $-\frac{1}{N}\log \rho_T^{N,r}(A)\geq V_r$. We wish to show that $-\frac{1}{N}\log \rho_T^{N}(A)\geq V_0$. Of course if $\rho_T^N(A)=0$, this is trivial, so we may assume that $\rho_T^N(A)>0$.
\vspace{1mm}

\noindent
Note that since $\liminf_{r\rightarrow 0}\rho_T^{N,r}(A)\geq \rho_T^N(A)> 0$, the set $\{V_r |r\in (0,1)\}$ is bounded. Set 
$$\tilde{K}^{i,r}(\bx)=\frac{1}{N}\sum_{j\neq i}^NK^r(x^i-x^j)\text{ and } \tilde{K}^{i}(\bx)=\frac{1}{N}\sum_{j\neq i}^NK(x^i-x^j).$$
Then, for $(\bm \alpha^r,f^r)$ an $r$-minimizer for $V_r$ we have
\vspace{-2.5mm}
\begin{align*}
    V_r&\geq H_N(f_0^r|\rho_0^N)+\frac{1}{4N}\sum_{i=1}^N\int_0^T\int_{\T^{dN}}|\alpha^{i,r}(t,\bx)|^2df_t^r(\bx)dt-r\\
    &=H_N(f_0^r|\rho_0^N)+\frac{1}{4N}\sum_{i=1}^N\int_0^T\int_{\T^{dN}}|\alpha^{i,r}(t,\bx)+\tilde{K}^{i,r}(\bx)-\tilde{K}^i(\bx)|^2df_t^r(\bx)dt\\
    &\null\;-\frac{1}{4N}\sum_{i=1}^N\int_0^T\int_{\T^{dN}}|\tilde{K}^{i,r}(\bx)-\tilde{K}^i(\bx)|^2df_t^r(\bx)dt-\frac{1}{2N}\sum_{i=1}^N\int_0^T\int_{\T^{dN}}\alpha^{i,r}(t,\bx)\cdot (\tilde{K}^{i,r}(\bx)-\tilde{K}^i(\bx))df_t^r(\bx)dt.
\end{align*}
On the one hand, since $V_r$ is uniformly bounded, this implies 
\be\label{ext1}
\sup_{r\in(0,1)}H_N(f_0^r|\rho_0^N)+\frac{1}{4N}\sum_{i=1}^N\sup_{r\in (0,1)}\int_0^T\int_{\T^{dN}}|\alpha^{i,r}(t,\bx)|^2df_t^r(\bx)dt<\infty.
\ee
We note that due to the fact that $f^r$ is an $(\bm \alpha^r,K^r)$-entropy solution, $\| K^r\|_{L^{\infty}}\leq \|K\|_{L^{\infty}}$ and \eqref{ext1}, an argument as in Step 1 yields 
\be\label{ext3}
\sup_{t\in [0,T]}\sup_{r\in (0,1)}H(f^r_t)<\infty.
\ee
On the other hand, by Remark \ref{rem1}, $f^r$ is an $(\bm \alpha^r+\tilde{K}^r-\tilde{K},K)$-entropy solution, therefore 
\begin{align}
V_r\geq V_0
+\frac{1}{4N}\sum_{i=1}^N\bigg(\int_0^T\int_{\T^{dN}}|\tilde{K}^{i,r}(\bx)-\tilde{K}^i(\bx)|^2df_t^r(\bx)dt-2\int_0^T\int_{\T^{dN}}\alpha^{i,r}(t,\bx)\cdot (\tilde{K}^{i,r}(\bx)-\tilde{K}^i(\bx))df_t^r(\bx)dt\bigg).\label{ext2}
\end{align}
To finish the proof, we will show that the integral terms in \eqref{ext2} converge to $0$ as $r\rightarrow 0$. Indeed, for any $M>0$ and $i\in \{1,..,N\}$ we have
\begin{align*}
    &\int_0^T\int_{\T^{dN}}|\tilde{K}^{i,r}(\bx)-\tilde{K}^i(\bx)|^2df_t^r(\bx)dt=  \\ 
    & \hspace{3cm}\int_0^T\int_{\{f_t^r>M\}}|\tilde{K}^{i,r}(\bx)-\tilde{K}^i(\bx)|^2df_t^r(\bx)dt +\int_0^T\int_{\{f_t^r\leq M\}}|\tilde{K}^{i,r}(\bx)-\tilde{K}^i(\bx)|^2df_t^r(\bx)dt 
    \\
    &\leq  \bigg( \int_0^T\int_{\{f_t^r>M\}}\frac{|\tilde{K}^{i,r}(\bx)-\tilde{K}^i(\bx)|^4}{\log f_t^r }df_t^r(\bx)dt   \bigg)^{1/2} \left( \int_0^TH(f^r_t)dt\right)^{1/2} +M\int_0^T\int_{\T^{dN}}|\tilde{K}^{i,r}(\bx)-\tilde{K}^i(\bx)|^2d\bx dt.
\end{align*}
Therefore, there exists a constant $C>0$ depending on the bound provided by \eqref{ext3} and $\| K\|_{L^{\infty}}$ such that
\begin{align*}
    \int_0^T\int_{\T^{dN}}|\tilde{K}^{i,r}(\bx)-&\tilde{K}^i(\bx)|^2df_t^r(\bx)dt\leq \frac{C}{\sqrt{\log M}}+MT\int_{\T^{dN}}|\tilde{K}^{i,r}(\bx)-\tilde{K}^i(\bx)|^2d\bx
\end{align*}
Now $K\in L^{\infty}$, so $\tilde{K}^{i,r}\xrightarrow{r\rightarrow 0} \tilde{K}^i$ in $L^2$, hence by passing to the limit we discover
$$0\leq \limsup_{r\rightarrow 0}\int_0^T\int_{\T^{dN}}|\tilde{K}^{i,r}(\bx)-\tilde{K}^i(\bx)|^2df_t^r(\bx)dt\leq \frac{C}{\sqrt{\log M}}$$
and since $M$ was arbitrary, we get
\be\label{ext4}
\lim_{r\rightarrow 0}\int_0^T\int_{\T^{dN}}|\tilde{K}^{i,r}(\bx)-\tilde{K}^i(\bx)|^2df_t^r(\bx)dt=0.
\ee
We also have by Cauchy-Schwartz
\begin{align*}
    \bigg|\int_0^T\int_{\T^{dN}}\alpha^{i,r}&(t,\bx)\cdot (\tilde{K}^{i,r}(\bx)-\tilde{K}^i(\bx))df_t^r(\bx)dt\bigg|\\
    &\leq\bigg( \int_0^T\int_{\T^{dN}}|\alpha^{i,r}(t,\bx)|^2df_t^r(\bx)dt  \bigg)^{1/2}\bigg( \int_0^T\int_{\T^{dN}}|\tilde{K}^{i,r}(\bx)-\tilde{K}^i(\bx)|^2df_t^r(\bx)dt  \bigg)^{1/2}.\\
\end{align*}
By \eqref{ext1} and \eqref{ext4} we get
\be\label{ext5}
\lim_{r\rightarrow 0}\int_0^T\int_{\T^{dN}}\alpha^{i,r}(t,\bx)\cdot (\tilde{K}^{i,r}(\bx)-\tilde{K}^i(\bx))df_t^r(\bx)dt=0.
\ee
Now we send $r$ to $0$ in \eqref{ext2} and use \eqref{ext4}, \eqref{ext5} to to conclude that 
\begin{align*}
    -\frac{1}{N}\log \rho^N_T(A)\geq -\frac{1}{N}\limsup_{r\rightarrow 0}\log \rho_T^{N,r}(A)\geq \limsup_{r\rightarrow 0}V_r\geq V_0.
\end{align*}
\end{proof}




\begin{proof}[Proof of Theorem \ref{thm.apriori}]
The first step will be to reinterpret \cite[Theorem 2]{FournierGuillin} in a convenient way. If we specialize this result to the torus, we find that for each $p$ there is a constant $C \geq 1$ depending only on $p$ and $d$ such that whenever $X^1,X^2,...$ are i.i.d. $\T^d$-valued random variables with common law $m$, we have $\bP\Big[\bd_p(m_{\bX}^N, m) > \epsilon \Big] \leq C \exp( - \frac{a_p(\eps) N}{C}).$
In other words, 
\begin{align*}
    - \frac{1}{N} \log \bP\Big[ {\bf d}_p(m_{\bX}^N, m) > \epsilon \Big] \geq \frac{a_p(x)}{C} - \log(C)/N.
\end{align*}
Recalling the variational formula \eqref{generalvarformula} from Lemma \ref{lem.entvar}, we find that the implication
\begin{align} \label{fgrewritten}
    \frac{1}{N} H(Q | m^{\otimes N}) < \frac{a_p(\eps)}{C} - \log(C)/N \implies Q(A^{m,p}_{N,\epsilon}) < 1
\end{align}
holds for $Q \in \cP(\T^{dN})$, where $A^{m,p}_{N,\eps} = \Big\{\bx \in (\T^d)^N \Big| {\bf d}_p(m_{\bx}^N, m) > \eps\Big\}$.

We are now going to combine \eqref{fgrewritten} with Proposition \ref{varformula} to complete the proof. Suppose that we have a pair $f^N$ and $\bm \alpha$ such that $f^N$ is an entropy solution of \eqref{Liouville foc}. Suppose further that
\begin{align*}
    H_N(f^N_0 | \ov{\rho}^N_0) + \frac{1}{4N} \sum_{i = 1}^N \int_0^T \int_{\T^d} |\alpha^i(t,\bx)|^2 df^N_t(\bx) dt < \frac{a_p(\eps)}{\cconc} - \frac{\log{\cconc}}{N}.
\end{align*}
holds for some $\cconc > 0$. By assumption, it follows that 
\begin{align*}
     H_N(f^N_T | \ov{\rho}^N_T) \leq \cent \bigg(\frac{1}{N} +a_p(\eps)/\cconc - \log \cconc/N \bigg) = \frac{\cent}{\cconc}a_p(\eps) - \frac{\cent\log \cconc - \cent}{N}.
\end{align*}
Some simple algebra shows that if $\cconc > \max\{C \cent, eC\}$, then (here we use $\cent \geq 1$) 
\begin{align*}
    \frac{\cent}{\cconc} a_p(\eps) - \frac{\cent\log \cconc - \cent}{N} \leq Ca_p(\eps) - \log (C)/N.
\end{align*}
In particular, setting $\cconc = eC\cent$ (i.e. $\cconc = C_d \cent$ with $\cconc = eC)$, and applying \eqref{fgrewritten}, we find that we have the implication
\begin{align*}
    H_N(f^N_0 &| \ov{\rho}^N_0) + \frac{1}{4N} \sum_{i = 1}^N \int_0^T \int_{\T^d} |\alpha^i(t,\bx)|^2 df^N_t(\bx) dt  < \frac{a_p(\eps)}{\cconc} - \log{\cconc}/N \\
    &\implies  H_N(f^N_T | \ov{\rho}^N_T) \leq \frac{a_p(\eps)}{C} - \log (C) /N \implies f^N_T(A_{N,\epsilon}) < 1.
\end{align*}
In light of Proposition \ref{varformula}, this implies $- \frac{1}{N} \log \rho_T^N(A_{N,\epsilon}) \geq a_p(\eps)/\cconc - \log(\cconc)/N$,
or in other words $\rho_T^N(A_{N,\epsilon}) \leq \cconc \exp (- \frac{\epsilon^d N}{\cconc}).$
\end{proof}

\subsection{Proof of Proposition \ref{prop.wminus}}

We begin by establishing the well-posedness of \eqref{limit pde}.

\begin{proposition} \label{prop.limitpde}
    Suppose Assumption \ref{assump.main} holds. Then \eqref{limit pde} admits a unique classical solution, which satisfies 
    \begin{align*}
        \|\ov{\rho}\|_{C^{2,\beta}_{t,x}} \leq C, \quad \inf_{t,x} \ov{\rho}(t,x) \geq C^{-1}, 
    \end{align*}
    with $C$ depending only $T$, $d$, and the constants $C_0$ and $\beta$ appearing in Assumption \ref{assump.main}.
\end{proposition}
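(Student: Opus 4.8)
The plan is to treat \eqref{limit pde} as a semilinear perturbation of the heat equation: produce a solution on a short time interval by a contraction argument, extend it globally by a parabolic $L^\infty$ a priori estimate, upgrade it to a classical solution by a bootstrap, and finally read off the positivity lower bound from a minimum principle and uniqueness from an energy estimate. Write $b[\mu]:=F+K*\mu$. Two elementary consequences of Assumption~\ref{assump.main} will be used throughout: because $K\in L^1$, convolution with $K$ is bounded, with operator norm at most $\|K\|_{L^1}$, on each of $L^p(\T^d)$, $W^{k,p}(\T^d)$ and $C^{k,\beta}(\T^d)$, so $K*\mu$ is never less regular than $\mu$ and in particular $b[\mu]\in L^\infty_x$ whenever $\mu$ has a bounded density; and, writing $\text{div}\,K=\text{div}\,V$ with $V\in L^\infty$ and $\|V\|_\infty=\|\text{div}\,K\|_{-1,\infty}$, we have $(\text{div}\,K)*\mu=\text{div}(V*\mu)$ with $\|V*\mu_t\|_\infty\le\|V\|_\infty\|\mu_t\|_{L^1}$.

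For existence I would first work locally. The map sending $\ov\rho$ to $e^{t\Delta}\rho_0-\int_0^t e^{(t-s)\Delta}\text{div}\big(\ov\rho(s)\,b[\ov\rho(s)]\big)\,ds$ is, via the heat-kernel bound $\|e^{\sigma\Delta}\text{div}\,g\|_\infty\le C\sigma^{-1/2}\|g\|_\infty$, a contraction on the ball $\{\,\|\ov\rho\|_{C([0,T_0];L^\infty(\T^d))}\le 2\|\rho_0\|_\infty\,\}$ once $T_0$ is small (depending only on $C_0$, since there $\|b[\ov\rho]\|_\infty\le\|F\|_\infty+2\|K\|_{L^1}\|\rho_0\|_\infty$). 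This yields a unique solution of \eqref{limit pde} on $[0,T_0]$, nonnegative and mass-preserving, which the bootstrap below makes classical; the solution extends as long as $\|\ov\rho\|_{L^\infty_{t,x}}$ remains finite — by the bootstrap below such a bound controls $\|\ov\rho\|_{C^{2,\beta}_{t,x}}$ — so it suffices to prove the global a priori bound: any classical nonnegative solution $\rho$ of \eqref{limit pde} on $[0,t]$ with $\sup_s\|\rho_s\|_{L^1}\le1$ obeys $\|\rho\|_{L^\infty([0,t]\times\T^d)}\le C(T,d,C_0)$ for $t\le T$. To see this, test the equation against $p\rho^{p-1}$ and integrate by parts; using mass conservation, $\text{div}\,F\in L^\infty$, and the identity $(\text{div}\,K)*\rho=\text{div}(V*\rho)$ with $\|V*\rho_s\|_\infty\le\|V\|_\infty$ to handle the interaction term, one obtains
\begin{align*}
\frac{d}{ds}\|\rho_s\|_{L^p}^p+\frac{2(p-1)}{p}\,\|\nabla\rho_s^{p/2}\|_{L^2}^2\le C p^2\,\|\rho_s\|_{L^p}^p,
\end{align*}
with $C$ depending only on $\|\text{div}\,F\|_\infty$ and $\|\text{div}\,K\|_{-1,\infty}$, hence only on $C_0$. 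Interpolating $\|\rho^{p/2}\|_{L^2}$ between $L^1$ and $\dot H^1$ (Gagliardo--Nirenberg on $\T^d$) and absorbing a fraction of the Fisher-information term leads to $\sup_{s\le t}\|\rho_s\|_{L^p}^p\le\|\rho_0\|_\infty^p+C p^{d+2}t\,\sup_{s\le t}\|\rho_s\|_{L^{p/2}}^p$; running this over $p=2^k$, $k\ge1$, and using that $\prod_k(C\,2^{k(d+2)}T)^{2^{-k}}$ converges while $\|\rho_s\|_{L^1}\le1$, gives the claimed $L^\infty$ bound. In particular the local solution does not blow up, so it is global on $[0,T]$.

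Next I would bootstrap. Since $\ov\rho\,b[\ov\rho]\in L^\infty_{t,x}$, parabolic $L^p$-maximal regularity applied to $\partial_t\ov\rho-\Delta\ov\rho=-\text{div}(\ov\rho\,b[\ov\rho])$ gives $\ov\rho\in L^p_tW^{1,p}_x$ for every $p<\infty$; then $b[\ov\rho]\in L^\infty_{t,x}$ and $\text{div}\,b[\ov\rho]=\text{div}\,F+V*\nabla\ov\rho\in L^p_{t,x}$, so rewriting \eqref{limit pde} in non-divergence form, with right-hand side $-b[\ov\rho]\cdot\nabla\ov\rho-(\text{div}\,b[\ov\rho])\ov\rho\in L^p_{t,x}$, and applying maximal regularity once more yields $\ov\rho\in W^{1,p}_tL^p_x\cap L^p_tW^{2,p}_x$; choosing $p$ large, parabolic Sobolev embedding then gives $\nabla\ov\rho\in C^{\beta}_{t,x}$. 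Consequently $b[\ov\rho]=F+K*\ov\rho$ and $\text{div}\,b[\ov\rho]$ lie in the parabolic H\"older class $C^{\beta/2,\beta}_{t,x}$, and standard parabolic Schauder theory (using $\rho_0\in C^{2,\beta}$) upgrades $\ov\rho$ to a classical solution with $\|\ov\rho\|_{C^{2,\beta}_{t,x}}\le C(T,d,C_0,\beta)$.

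For the lower bound, set $\Lambda:=\|\text{div}\,b[\ov\rho]\|_\infty$, which by the previous step is at most a constant depending only on $T,d,C_0,\beta$; then $w:=e^{\Lambda s}\ov\rho\ge0$ is a classical supersolution of the operator $\partial_s-\Delta+b[\ov\rho]\cdot\nabla$, which has no zeroth-order term, so the parabolic minimum principle gives $w\ge\min_x\rho_0\ge C_0^{-1}$ and hence $\inf_{[0,T]\times\T^d}\ov\rho\ge e^{-\Lambda T}C_0^{-1}$, bounded below by the reciprocal of a constant depending only on $T,d,C_0,\beta$. For uniqueness, if $\ov\rho_1,\ov\rho_2$ are two classical solutions then $w:=\ov\rho_1-\ov\rho_2$ solves $\partial_s w-\Delta w+\text{div}\big(w\,b[\ov\rho_1]+\ov\rho_2\,(K*w)\big)=0$ with $w|_{s=0}=0$, and testing against $w$, using $\|b[\ov\rho_1]\|_\infty,\|\ov\rho_2\|_\infty\le C$ and $\|K*w_s\|_{L^2}\le\|K\|_{L^1}\|w_s\|_{L^2}$, yields $\frac{d}{ds}\|w_s\|_{L^2}^2\le C\|w_s\|_{L^2}^2$, so $w\equiv0$ by Gr\"onwall. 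I expect the only non-routine step to be the global-in-time $L^\infty$ estimate: the sole singular/nonlinear feature is the term $\text{div}(\ov\rho\,(K*\ov\rho))$, and the Moser iteration closes precisely because $\text{div}\,K\in W^{-1,\infty}$ lets one trade the dangerous part of $\text{div}\,b[\ov\rho]$ for a quantity controlled by $\|\text{div}\,K\|_{-1,\infty}$ and the conserved mass — with enough care that the $p$-dependence of the constants stays summable so that the bound survives the limit $p\to\infty$. Everything else is standard linear parabolic theory, with all constants tracked to depend only on $d$, $T$, $\beta$ and $C_0$.
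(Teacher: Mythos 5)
Your argument is correct, and it reaches the two non-trivial a priori bounds by a genuinely different route than the paper. For the $L^\infty$ upper bound, you test against $p\ov{\rho}^{\,p-1}$ and run a Moser iteration, handling the dangerous term $\ov{\rho}\,(\mathrm{div}\,K)*\ov{\rho}$ by writing $(\mathrm{div}\,K)*\ov{\rho}=\mathrm{div}(V*\ov{\rho})$, integrating by parts once more, and absorbing into the dissipation; the paper instead differentiates the entropy $H(\ov{\rho}_t)$ to get $\int_0^T I(\ov{\rho}_t)\,dt\leq C$, hence $\|D\ov{\rho}\|_{L^1_{t,x}}\leq C$, and then applies the maximum principle to the non-divergence form, whose zeroth-order coefficient $V*D\ov{\rho}$ is controlled in $L^1_tL^\infty_x$ by exactly that quantity. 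The paper's route is shorter and delivers the lower bound $\inf\ov{\rho}\geq C^{-1}$ simultaneously (same exponential factor, opposite sign), whereas you must postpone the lower bound until after the bootstrap so that $\Lambda=\|\mathrm{div}\,b[\ov{\rho}]\|_\infty$ is finite --- which is fine, since the bootstrap nowhere uses positivity, but it makes the logical ordering slightly more delicate. Your Moser iteration is, on the other hand, more self-contained (no Fisher information needed) and the summability of $\prod_k(C2^{k(d+2)}T)^{2^{-k}}$ closes it as you say. The remaining ingredients differ only cosmetically: you build the solution by Picard iteration in $C([0,T_0];L^\infty)$ plus continuation, while the paper mollifies $K$ and passes to the limit by compactness; you prove uniqueness by an $L^2$ energy estimate using $\|K*w\|_{L^2}\leq\|K\|_{L^1}\|w\|_{L^2}$, while the paper computes $\frac{d}{dt}H(\ov{\rho}^1_t|\ov{\rho}^2_t)$; and the regularity bootstrap (maximal regularity, Gagliardo--Nirenberg, Sobolev embedding, then Schauder) is essentially identical to the paper's. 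All constants in your argument are traceable to $d$, $T$, $\beta$ and $C_0$ as required.
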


\begin{proof}
    The main challenge is to obtain appropriate a-priori estimates, so we explain this point in detail and then quickly sketch the existence and uniqueness part. Thus we assume for the moment that in addition to Assumption \ref{assump.main}, $K$ is smooth, so that we have a unique classical solution $\ov{\rho}$. In what follows, $C$ can increase from line to line but can depend freely on the constants indicated in the statement of the proposition, and dependence on other parameters will be clearly indicated, e.g. $C(p)$ indicates a constant which can depend on $p$ as well as the constants appearing in the statement of the proposition. Moreover, we let $\phi$ be a vector field with $\text{div} \phi = \text{div} K$ and $\|\phi\|_{\infty} = \|\text{div} K\|_{-1,\infty}$. First, we have by integration parts and Young's inequality
    \begin{align*}
        \frac{d}{dt} H(\ov{\rho}_t)
        &= - I(\ov{\rho}_t) + \int_{\T^d} D \ov{\rho} \cdot \big( F + \phi * \ov{\rho} \big) dx
        \\
        &\leq - \frac{1}{2}  I(\ov{\rho}_t) + \frac{1}{2} \int_{\T^d} |F + \phi * \ov{\rho} |^2 \rho dx \leq C (\|F\|^2_{\infty} + \|\text{div} K\|^2_{-1,\infty}), 
    \end{align*}
    and so in particular $\int_0^T I(\ov\rho_t) dt \leq C$, from which, by Cauchy-Schwartz, it follows that $\|D \rho\|_{L^1_{t,x}} \leq C$. Rewriting \eqref{limit pde} in non-divergence form as 
    \begin{align} \label{nondiv}
        \partial_t \ov{\rho} = \Delta \ov{\rho} - D \ov{\rho} \cdot \big(K * \rho \big) - \ov{\rho}  \phi *  D \ov{\rho} , 
    \end{align}
    a standard argument using the maximum principle shows that for each $(t,x)$, we have 
    \begin{align*}
        \ov{\rho}(t,x)  \leq \|\rho_0\|_{\infty} \exp \Big( \int_0^t \|\phi *  D \ov{\rho}\|_{L_x^{\infty}} ds \Big) \leq \|\rho_0\|_{\infty} \exp \Big( \int_0^t \|\phi\|_{\linf} \|D \ov{\rho}_s\|_{L_x^{1}} ds \Big) \leq C,  
    \end{align*}
    and likewise 
    \begin{align*}
    \ov{\rho}(t,x)  \geq  \inf_x \rho_0(x) \exp \Big( -\int_0^t \|\phi *  D \ov{\rho}\|_{L_x^{\infty}} ds \Big) \geq \inf_x \rho_0(x) \exp \Big( - \int_0^t \|\phi\|_{\linf} \|D \ov{\rho}_s\|_{L_x^{1}} ds \Big) \geq C^{-1}. 
    \end{align*}
    Thus we have $\|\ov{\rho}\|_{\linf} \leq C$, and $\inf_{t,x} \ov{\rho} \geq C^{-1}$. From here, we view \eqref{nondiv} as a perturbation of the heat equation, applying the standard Calderon-Zygmund estimates and then the Gagliardo-Nirenberg interpolation inequality to get for any $p < \infty$, 
    \begin{align*}
        \norm{\ov{\rho}}_{W_{t,x}^{2,p}} &\leq C(p) \Big( \|\ov \rho_0\|_{W_x^{2,p}} + \| \phi * D \ov{\rho}\|_{L^p_{t,x}} \Big) \leq C(p) \Big( 1 + \|D \ov{\rho}\|_{L^p_{t,x}} \Big) \\ & \leq C(p)\Big(1 + \|\ov{\rho}\|_{L_{t,x}^p}^{1/2}\|\ov{\rho}\|_{W_{t,x}^{2,p}}^{1/2} \Big) 
        \leq C(p) + \frac{1}{2} \norm{\ov{\rho}}_{W_{t,x}^{2,p}}.
    \end{align*}
    Thus for any $p < \infty$, $\norm{\ov{\rho}}_{W_{t,x}^{2,p}} \leq C(p)$. Choosing a large enough $p$, we get by Sobolev embeddings $\norm{\ov{\rho}}_{C_{t,x}^{2,\beta}} \leq C$, and then by again viewing \eqref{nondiv} as a perturbation of the heat equation, we get by the Schauder estimates 
    \begin{align*}
        \norm{\ov{\rho}}_{C_{t,x}^{2,\beta}} \leq C \Big(\norm{\rho_0}_{C^{2,\beta}} + \norm{ D \ov{\rho}}_{C_{t,x}^{\beta}} \norm{K * \rho}_{C_{t,x}^{\beta}} + \|\rho\|_{C^{\beta}_{t,x}} \|\phi * D \ov{\rho}\|_{C_{t,x}^{\beta}} \Big) \leq C \Big(1 + \|\ov{\rho}\|_{C_{t,x}^{1,\beta}}^2 \Big) \leq C. 
    \end{align*}
    Thus we have established that when $K$ is smooth, the unique classical solution of \eqref{limit pde} satisfies the estimates stated in the proposition. From this a-priori estimate, a standard mollification and compactness argument can be used to obtain the existence of a solution satisfying the desired bounds when $K$ is not smooth but Assumption \ref{assump.main} is in force. Uniqueness of classical solutions, meanwhile, can be easily proved in a number of ways, e.g. given two smooth solutions $\ov{\rho}_t^1$ and $\ov{\rho}_t^2$ one can compute $\frac{d}{dt} H(\ov{\rho}_t^1 | \ov{\rho}_t^2)$ and conclude via Grownall's inequality. We omit the details.
\end{proof}

The proof of Proposition \ref{prop.wminus} also requires the following lemma, which is an easy extension of the main quantitative estimate of \cite{jabin2018quantitative} to the ``controlled" Liouivlle equation \eqref{Liouville foc}.

\begin{lemma} \label{lem.controlledentropyest}
Let Assumption \ref{assump.main} hold, let $\ov{\rho}$ be the unique classical solution of \eqref{limit pde} provided by Proposition \ref{prop.limitpde}, and let $f$ be an entropy solution of \eqref{Liouville foc} in the sense of Definition \ref{def entropy}. There exists a constant $\cent$ depending only on $T$, $d$, and the constants $\beta$ and $C_0$ in Assumption \ref{assump.main} such that
\be 
H_N(f_T|\overline{\rho}^N_T)\leq H_N(f_0|\overline{\rho}^N_0)+ \cent\bigg(\frac{1}{N}+  \frac{1}{4N}\sum_{i=1}^N\int_0^T\int_{\mathbb{T}^d}|\alpha^i(t,\bx)|^2df_t(\bx)dt\bigg).
\ee
\end{lemma}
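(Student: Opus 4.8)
The plan is to rerun the relative‑entropy argument of \cite{jabin2018quantitative} with the control $\bm\alpha$ adjoined to the drift of \eqref{Liouville foc}, and to verify that its only effect is an extra term which can be absorbed into the relative Fisher information, at the cost of the term $\tfrac{1}{4N}\sum_i\int_0^T\int|\alpha^i|^2\,df_t\,dt$ in the statement. We may assume $\sum_{i}\int_0^T\int_{\T^{dN}}|\alpha^i|^2\,df_t\,dt<\infty$ and $H_N(f_0|\bar\rho^N_0)<\infty$, since otherwise there is nothing to prove. Write $\bar\rho^N=\bar\rho^{\otimes N}$ and $g_t=f_t/\bar\rho^N_t$; by Proposition \ref{prop.limitpde}, $\bar\rho$ is classical with $C^{-1}\le\bar\rho\le C$ and $\|\bar\rho\|_{C^{2,\beta}_{t,x}}\le C$, so $\log\bar\rho$ has bounded first and second space derivatives and $\bar\rho^N$ solves \eqref{Liouville foc} classically with control $0$ and drift $\bar b^i(\bx)=F(x^i)+(K*\bar\rho)(x^i)$.

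The first step is to combine the entropy inequality \eqref{Entineq} for $f$ with the exact evolution of $-\int f_t\log\bar\rho^N_t$ obtained from the weak form of \eqref{Liouville foc} tested against the smooth function $\log\bar\rho^N$. Exactly as in \cite{jabin2018quantitative}, the diffusion contributions and the $F$-terms cancel, and one arrives at the relative‑entropy dissipation inequality
\begin{align*}
H_N(f_t|\bar\rho^N_t)+\frac{1}{N}\sum_{i=1}^N\int_0^t\!\int_{\T^{dN}}f_s\,|D_{x^i}\log g_s|^2\,d\bx\,ds\le H_N(f_0|\bar\rho^N_0)+\frac{1}{N}\sum_{i=1}^N\int_0^t\!\int_{\T^{dN}}f_s\,\big(\alpha^i+R^i\big)\cdot D_{x^i}\log g_s\,d\bx\,ds,
\end{align*}
where $R^i(\bx)=\tfrac1N\sum_{j\neq i}K(x^i-x^j)-(K*\bar\rho)(x^i)$ is the interaction error of \cite{jabin2018quantitative} (with $K$ replaced by the $L^\infty$ field $V$ from Definition \ref{def entropy} in the terms where $K$ cannot be paired directly).

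The second step is to estimate the drift error on the right. The term built from $R^i$ is precisely the one estimated in \cite{jabin2018quantitative}: using $K,\text{div}\,K\in W^{-1,\infty}$, $F\in C^{1,\beta}$ and the bounds on $\bar\rho$ just recalled, their combinatorial/large‑deviation argument gives, for some $\lambda\in(0,1)$ and $C=C(d,T,\beta,C_0)$,
\begin{align*}
\frac{1}{N}\sum_{i=1}^N\int_0^t\!\int_{\T^{dN}}f_s\,R^i\cdot D_{x^i}\log g_s\,d\bx\,ds\le \frac{\lambda}{N}\sum_{i=1}^N\int_0^t\!\int_{\T^{dN}}f_s\,|D_{x^i}\log g_s|^2\,d\bx\,ds+C\int_0^t\Big(H_N(f_s|\bar\rho^N_s)+\tfrac1N\Big)ds,
\end{align*}
while for the new control term I would simply use Young's inequality with small parameter,
\begin{align*}
\frac{1}{N}\sum_{i=1}^N\int_0^t\!\int_{\T^{dN}}f_s\,\alpha^i\cdot D_{x^i}\log g_s\,d\bx\,ds\le \frac{1-\lambda}{N}\sum_{i=1}^N\int_0^t\!\int_{\T^{dN}}f_s\,|D_{x^i}\log g_s|^2\,d\bx\,ds+\frac{C'}{N}\sum_{i=1}^N\int_0^T\!\int_{\T^{dN}}|\alpha^i|^2\,df_s\,ds,
\end{align*}
with $C'$ depending only on $1-\lambda$. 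Substituting both bounds absorbs the relative Fisher information into the left‑hand side and leaves an integral inequality of Gronwall type for $t\mapsto H_N(f_t|\bar\rho^N_t)$ with forcing term $C/N+\tfrac{C'}{N}\sum_i\int_0^T\int|\alpha^i|^2\,df_s\,ds$; Gronwall then yields the claim. The cosmetic factor $\tfrac14$ and the overall constant $\cent$ in the statement leave ample room to absorb $C'$ and the $e^{CT}$ coming from Gronwall.

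The only genuinely new ingredient is the control term, whose treatment above is elementary, so the ``hard part'' here is really bookkeeping: one must (i) check that the Jabin--Wang estimate for the $R^i$-term is unaffected by the presence of the extra drift $\alpha^i$ --- which it is, since that estimate uses only the structure of $R^i$ and never the shape of the remaining drift --- and (ii) track the Fisher‑information budget so that the fraction $\lambda$ spent on the interaction estimate and the fraction $1-\lambda$ spent on Young's inequality together do not exceed the Fisher information dissipated in \eqref{Entineq}. A final minor point is that \eqref{Entineq} is merely an inequality rather than an identity; but this is precisely the framework in which the Jabin--Wang computation is set up, so it causes no trouble.
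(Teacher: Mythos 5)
Your proposal is correct and takes essentially the same route as the paper's proof: both rerun the Jabin--Wang relative-entropy computation for the controlled equation, invoke their Lemmas 3--4 as a black box to bound the interaction error by a fraction of the relative Fisher information plus $C(H_N+1/N)$, absorb the new $\alpha^i\cdot D_{x^i}\log(f/\ov{\rho}^N)$ term into the remaining Fisher information via Young's inequality, and conclude with Gronwall. The only cosmetic differences are the order of these two absorptions and that the paper keeps the interaction error in the integrated-by-parts form of Jabin--Wang's Lemma 2 (paired with $D_{x^i}\log\ov{\rho}^N$ plus a divergence term) rather than with $D_{x^i}\log g$.
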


\begin{proof}
    The proof follows closely the proof of \cite[Theorem 1]{jabin2018quantitative}, and so we report only the main difference. The first step is to mimic \cite[Lemma 2]{jabin2018quantitative} (here it is crucial that we work with an entropy solution $f$) to get
    \begin{align*}
        H_N(f_t | \ov{\rho}^N_t) &\leq H_N(f_0 | \ov{\rho}^N_0) - \frac{1}{N^2} \sum_{i,j = 1}^N \int_0^t \int_{\T^{dN}}  \big(K(x^i - x^j) - K * \ov{\rho}(x^i) \big) \cdot D_{x^i} \log \ov{\rho}^N df_s(\bx) ds \\
        &\qquad - \frac{1}{N^2} \sum_{i,j = 1}^N \int_0^t \int_{\T^{dN}}  \bigg(\text{div}_{x^i} K(x^i - x^j) - \text{div}_{x^i} K * \ov{\rho}(x^i) \bigg) df_s(\bx) ds 
        \\
        &\qquad +  \frac{1}{N} \sum_{i = 1}^N \int_0^t \int_{\T^{dN}}  \alpha^i \cdot D_{x^i} \log \frac{f}{\ov{\rho}^N} df_s(\bx) ds - \frac{1}{N} \sum_{i = 1}^N \int_0^t \int_{\T^{dN}} \Big| D_{x^i} \log \frac{f}{\ov{\rho}^N} \Big|^2 df_s(\bx) ds.
    \end{align*}
    Young's inequality immediately gives 
    \begin{align} \label{gronwall}
        H_N(f_t | \ov{\rho}^N_t) &\leq H_N(f_0 | \ov{\rho}^N_0) - \frac{1}{N^2} \sum_{i,j = 1}^N \int_0^t \int_{\T^{dN}}  \bigg(K(x^i - x^j) - K * \ov{\rho}(x^i) \bigg) \cdot D_{x^i} \log \ov{\rho}^N df_s(\bx) ds \nonumber \\
        &- \frac{1}{N^2} \sum_{i,j = 1}^N \int_0^t \int_{\T^{dN}}  \bigg(\text{div}_{x^i} K(x^i - x^j) - \text{div}_{x^i} K * \ov{\rho}(x^i) \bigg) df_s(\bx) ds 
        \nonumber \\
        &  - \frac{1}{2N} \sum_{i = 1}^N \int_0^t \int_{\T^{dN}}  \Big| D_{x^i} \log \frac{f}{\ov{\rho}^N} \Big|^2 df_s(\bx) ds + \frac{1}{2N} \sum_{i = 1}^N \int_0^t \int_{\T^{dN}}  \big|\alpha^i(t,\bx)\big|^2 df_s(\bx) ds.
    \end{align}
    Notice that up to the last term and the factor $1/2$ appearing in the penultimate term, this is the same inequality appearing in \cite[Lemma 2]{jabin2018quantitative}. We now follow exactly the proof of \cite[Theorem 1]{jabin2018quantitative}, applying Lemmas 3 and 4 of \cite{jabin2018quantitative} (which are easily seen to apply here despite the fact that $f$ satisfies the ``perturbed" Liouville equation \eqref{Liouville foc} rather than the original Liouville equation \eqref{Liouville fo}) to bound the second and third terms appearing on the right-hand side of \eqref{gronwall}. This results in the bound 
    \begin{align*}
        H_N(f_t | \ov{\rho}^N_t) \leq H_N(f_0 | \ov{\rho}^N_0) + C \int_0^t \bigg(H_N(f_s | \ov{\rho}^N_s) + \frac{1}{N} \bigg) ds + \frac{1}{N} \sum_{i = 1}^N \int_0^t \int_{\T^{dN}}  \big|\alpha^i(s,\bx)\big|^2 df_s(\bx) ds, 
    \end{align*}
    with $C$ depending only on the constants indicated in the lemma. An application of Gronwall's inequality completes the proof.
\end{proof}

\begin{proof}[Proof of Proposition \ref{prop.wminus}]
    The existence of an admissible entropy solution is proved already in \cite[Proposition 1]{jabin2018quantitative}. Let $\rho^{N, \delta}$ be the unique classical solutions of \eqref{Liouville fo} with $K_{\delta}$ replacing $K$, and $\ov{\rho}^{\delta}$ be the unique classical solution of \eqref{limit pde} with $K_{\delta}$ replacing $K$. By Theorem \ref{thm.apriori} and Lemma \ref{lem.controlledentropyest}, we have 
    \begin{align} \label{concentrationliouvilledelta}
    \rho^{N,\delta}_t(A^{p,\delta}_{N,\e}) \leq \cconc \exp(- \cconc^{-1} a_p(\eps) N), \text{ where }A^{p,\delta}_{N,\e}=\bigg\{ x\in \mathbb{T}^{dN}\bigg| {\bf d}_p(m^N_{\bx},(\overline{\rho}^{\delta})^{\otimes N}_t)>\e \bigg\},
\end{align}
with $\cconc$ depending only on the constants stated in the proposition (here we use the fact that $\|K_{\delta} \|_{W^{-1,\infty}} \leq \|K\|_{W^{-1,\infty}}$ and $\|\text{div} \, K_{\delta} \|_{W^{-1,\infty}} \leq \|\text{div} \, K \|_{W^{-1,\infty}}$). Let $\delta_k$ be the sequence appearing in the definition of admissible entropy solution. Notice that by the uniqueness part of Proposition \ref{prop.limitpde}, we must have $\ov{\rho}^{\delta_k}_t \to \ov{\rho}_t$ for each fixed $t$. Notice also that for $k$ large enough, we will have $A_{N,\eps}^{p,\delta_k} \supset A^p_{N,2\eps}$, so that 
\begin{align*}
    \rho^N_T(A^p_{N, 2\eps}) \leq \liminf_{k \to \infty} \rho^{N,\delta_k}_T(A^p_{N,2\eps}) \leq \liminf_{k \to \infty} \rho^{N,\delta_k}_T(A_{N, \eps}^{p,\delta_k}) \leq \cconc \exp(-\cconc^{-1} a_p(\eps) N), 
\end{align*}
which, after replacing $\cconc$ by $2\cconc$, completes the proof.
\end{proof}

\bibliographystyle{alpha}		
\bibliography{singular}

\end{document}